
\documentclass[]{interact}
\usepackage{amsmath}
\usepackage{epstopdf}
\usepackage{subfigure}
\DeclareMathOperator{\real}{Re}
\usepackage[numbers,sort&compress]{natbib}
\bibpunct[, ]{[}{]}{,}{n}{,}{,}
\makeatletter
\def\NAT@def@citea{\def\@citea{\NAT@separator}}
\makeatother

\theoremstyle{plain}
\newtheorem{theorem}{Theorem}[section]
\newtheorem{lemma}[theorem]{Lemma}
\newtheorem{corollary}[theorem]{Corollary}
\newtheorem{proposition}[theorem]{Proposition}

\theoremstyle{definition}

\theoremstyle{remark}
\newtheorem{remark}{Remark}

\begin{document}
\title{Starlikeness of the generalized  Bessel function}

\author{
\name{Rosihan M. Ali\textsuperscript{a}, See Keong Lee\textsuperscript{b} and Saiful R. Mondal\textsuperscript{c} \thanks{CONTACT See Keong Lee. Email: sklee@usm.my}}
\affil{\textsuperscript{a, b}School of  Mathematical Sciences,
Universiti Sains Malaysia,  11800 USM Penang, Malaysia; \textsuperscript{c}Department of Mathematics and Statistics, Collage of Science,
King Faisal University, Al-Hasa 31982, Hofuf, Saudi Arabia.}
}

\maketitle

\begin{abstract}
For a fixed $a \in \{1, 2, 3, \ldots\},$ the radius of starlikeness of positive order is obtained for each of the normalized analytic functions
\begin{align*}
\mathtt{f}_{a, \nu}(z)&:= \bigg(2^{a \nu-a+1} a^{-\frac{a(a\nu-a+1)}{2}} \Gamma(a \nu+1) {}_a\mathtt{B}_{2a-1, a \nu-a+1, 1}(a^{a/2} z)\bigg)^{\tfrac{1}{a \nu-a+1}},\\
\mathtt{g}_{a, \nu}(z)&:= 2^{a \nu-a+1} a^{-\frac{a}{2}(a\nu-a+1)} \Gamma(a \nu+1) z^{a-a\nu} {}_a\mathtt{B}_{2a-1, a \nu-a+1, 1}(a^{a/2} z),\\
\mathtt{h}_{a, \nu}(z)&:= 2^{a \nu-a+1} a^{-\frac{a}{2}(a\nu-a+1)} \Gamma(a \nu+1) z^{\frac{1}{2}(1+a-a\nu)} {}_a\mathtt{B}_{2a-1, a \nu-a+1, 1}(a^{a/2} \sqrt{z})
\end{align*}
in the unit disk, where ${}_a\mathtt{B}_{b, p, c}$ is the generalized Bessel function
\begin{align*}
{}_a\mathtt{B}_{b, p, c}(z):= \sum_{k=0}^\infty \frac{(-c)^k}{k! \; \mathrm{\Gamma}{\left( a k +p+\frac{b+1}{2}\right)} } \left(\frac{z}{2}\right)^{2k+p}.
\end{align*}
The best range on $\nu$ is also obtained for a fixed $a$ to ensure the functions $\mathtt{f}_{a, \nu}$ and $\mathtt{g}_{a, \nu}$ are starlike of positive order in the unit disk. When $a=1,$ the results obtained reduced to earlier known results.
\end{abstract}
\begin{keywords}
Bessel function; generalized Bessel function; starlike function; radius of starlikeness
\end{keywords}
\begin{amscode}
  33C10; 30C45
 \end{amscode}

\maketitle


\section{Introduction}

There is a vast literature describing the importance and applications of the Bessel function of the first kind of order $p$ given by
\begin{equation*}\label{Bessel}
\mathtt{J}_{p}(x):=  \sum_{k=0}^\infty \frac{(-1)^k}{k! \; \mathrm{\Gamma}{\left(k+p+1\right)} } \left(\frac{x}{2}\right)^{2k+p},
\end{equation*}
where $\mathrm{\Gamma}$ is the familiar gamma function. Various generalizations of the Bessel function have also been studied. Perhaps a more complete generalization is that given by Baricz in \cite{Baricz-Galues-Bessel}. In this case, the generalized Bessel function takes the form
\begin{align}\label{eqn:bessel-definition}
{}_a\mathtt{B}_{b, p, c}(x):= \sum_{k=0}^\infty \frac{(-c)^k}{k! \; \mathrm{\Gamma}{\left( a k +p+\frac{b+1}{2}\right)} } \left(\frac{x}{2}\right)^{2k+p}
\end{align}
for $a \in\mathbb{N}=\{1, 2, 3, \ldots\},$ and $b, p, c, x \in \mathbb{R}$. It is evident that the function ${}_a\mathtt{B}_{b, p, c}$ converges absolutely at each $x \in \mathbb{R}$. This generalized Bessel function was further investigated in \cite{Ali-2, Ali-1} for $z \in \mathbb{D}= \{ z \in \mathbb{C}: |z|<1\}.$ It was shown in \cite{Ali-1} that the generalized Bessel function ${}_{a}\mathtt{B}_{b, p,c}$ is a solution of an $(a+1)$-order differential equation
 \begin{equation*}
 (D-p)\prod_{j=1}^a\left(D+\tfrac{2p+b+1-2j}{a}-p\right)y(x)+\frac{cx^2}{a^a 2^{1-a}}
y(x)= 0,
 \end{equation*}
 where the operator $D$ is given by $D := x (d/dx)$. For $a=1,$ the differential equation reduces to
$$x^2 y''(x) + bxy'(x) + (c x^2-p^2 +(1-b)p) y(x)=0.$$ Thus it yields the classical Bessel differential equation for $b=c=1.$ Interesting functional inequalities for ${}_a\mathtt{B}_{b, p, -\alpha^2}$ were obtained, particularly for the case $a=2.$

In \cite{BKS-ams}, Baricz \emph{et. al} investigated geometric properties involving the Bessel function of the first kind in $\mathbb{D}$ for the following three functions :
\begin{align}\notag
\mathtt{f}_{\nu}(z)&= \left(2^{\nu} \Gamma(\nu+1) \mathtt{J}_{\nu}(z)\right)^{\tfrac{1}{\nu}},\\
\mathtt{g}_{\nu}(z)&= 2^{\nu} \Gamma(\nu+1) z^{1-\nu} \mathtt{J}_{\nu}(z),\label{normal-BF-1}\\
\mathtt{h}_{\nu}(z)&= 2^{\nu} \Gamma(\nu+1) z^{1-\frac{\nu}{2}} \mathtt{J}_{\nu}(\sqrt{z}). \notag
\end{align}
 Each function is suitably normalized to ensure that it belongs to the class $\mathcal{A}$ consisting of analytic functions $f$ in $\mathbb{D}$ satisfying $f(0)=f'(0)-1=0$. Here the principal branch is assumed, which is positive for $z$ positive.

An important geometric feature of a complex-valued function is starlikeness. For $0 \leq \beta <1,$ the class of starlike functions of order $\beta$, denoted by $\mathcal{S}^{\ast}(\beta)$, are functions $f \in \mathcal{A}$ satisfying
 \begin{align*}
\real\left(\frac{z f'(z)}{f(z)}\right)>\beta \quad \text{for all} \quad z \in \mathbb{D}.
\end{align*}
In the case $\beta=0,$ these functions are simply said to be starlike (with respect to the origin). Geometrically $f \in \mathcal{S}^{\ast}:=\mathcal{S}^{\ast}(0)$  if the linear segment $tw$, $0 \leq t \leq 1,$ lies completely in $f(\mathbb{D})$ whenever $w \in f(\mathbb{D})$. A starlike function is necessarily univalent in $\mathbb{D}$.

The three functions given by \eqref{normal-BF-1} do not possess the property of starlikeness in the whole disk $\mathbb{D}$. Thus it is of interest to find the largest subdisk in $\mathbb{D}$ that gets mapped by these functions onto starlike domains. In general, the \emph{radius of starlikeness of order $\beta$} for a given class $\mathcal{G}$ of $\mathcal{A}$, denoted by $r^{*}_{\beta}$, is the largest number $r_0 \in (0,1)$ such that $r^{-1}f(rz)\in \mathcal{S}^{\ast}(\beta)$ for $0< r\leq r_0$ and for all $f \in \mathcal{G}$. Analytically,
\begin{align*}
r^{\ast}_\beta(\mathcal{G}):={\rm sup}\ \left\{r>0 :  \real\left(\frac{z f'(z)}{f(z)}\right)>\beta, \quad z \in \mathbb{D}_r, f \in \mathcal{G} \right\},
\end{align*}
where $\mathbb{D}_r= \{ z: |z|<r\}.$

In \cite{BKS-ams}, Baricz \emph{et. al} obtained the radius of starlikeness of order $\beta$ for each of the three functions $\mathtt{f}_{\nu}$, $\mathtt{g}_{\nu},$ and $\mathtt{h}_{\nu}$ given by \eqref{normal-BF-1}. This extends the earlier work of Brown in \cite{Brown-1} who obtained the radius of starlikeness (of order $0$) for functions $\mathtt{f}_{\nu}$ and $\mathtt{g}_{\nu}$.

For $a \in\mathbb{N},$ we consider the following extension of the three functions in \eqref{normal-BF-1} involving the generalized Bessel function:
\begin{align}\notag
\mathtt{f}_{a, \nu}(z)&:= \bigg(2^{a \nu-a+1} a^{-\frac{a(a\nu-a+1)}{2}} \Gamma(a \nu+1) {}_a\mathtt{B}_{2a-1, a \nu-a+1, 1}(a^{a/2} z)\bigg)^{\tfrac{1}{a \nu-a+1}},\\
\mathtt{g}_{a, \nu}(z)&:= 2^{a \nu-a+1} a^{-\frac{a}{2}(a\nu-a+1)} \Gamma(a \nu+1) z^{a-a\nu} {}_a\mathtt{B}_{2a-1, a \nu-a+1, 1}(a^{a/2} z),\label{normal-GBF-1}\\
\mathtt{h}_{a, \nu}(z)&:= 2^{a \nu-a+1} a^{-\frac{a}{2}(a\nu-a+1)} \Gamma(a \nu+1) z^{\frac{1}{2}(1+a-a\nu)} {}_a\mathtt{B}_{2a-1, a \nu-a+1, 1}(a^{a/2} \sqrt{z}). \notag
\end{align}
Here the function $\mathtt{f}_{a, \nu}$ is taken to be the principal branch (see the following section). Evidently for $a=1,$ these functions are those given by \eqref{normal-BF-1} treated by Baricz \emph{et. al} in \cite{BKS-ams}. Denote by $r_\beta^*(f)$ to be the radius of starlikeness of order $\beta$ for a given function ${f}$. In this paper, we find $r_\beta^*(f_a)$ when $f_a$ is either one of the three functions in \eqref{normal-GBF-1}. The best range on $\nu$ is also obtained for a fixed $a$ to ensure the functions $\mathtt{f}_{a, \nu}$ and $\mathtt{g}_{a, \nu}$ are starlike of order $\beta$ in the unit disk. When $a=1,$ the results obtained reduced to earlier known results.


For $a=1$, the generalized Bessel function \eqref{eqn:bessel-definition} is simply written as $\mathtt{B}_{b, p, c} := {}_1\mathtt{B}_{b, p, c}.$ Thus
\begin{equation}\label{Bessela1}
\mathtt{B}_{b, p, c}(z):= \sum_{k=0}^\infty \frac{(-c)^k}{k! \; \mathrm{\Gamma}{\left(k +p+\frac{b+1}{2}\right)}} \left(\frac{z}{2}\right)^{2k+p}.
\end{equation}

\begin{proposition}\cite[Proposition 2.2]{Ali-1}\label{product}
Let $a \in\mathbb{N},$ and $b, p, c, \in \mathbb{R}.$ Then
\begin{equation*}
{}_a \mathtt{B}_{b, p, c}(z)=(2 \pi)^{\tfrac{a-1}{2}}a^{-p-\tfrac{b}{2}}\left(\frac{z}{2}\right)^{p} \prod_{j=1}^{a}\left(\frac{z}{2 a^{a/2}}\right)^{-\frac{p+j-1}{a}}\mathtt{B}_{\frac{b+1-a}{a}, \frac{p+j-1}{a}, c}\left(\frac{z}{ a^{a/2}}\right),
\end{equation*}
where $\mathtt{B}_{b, p, c}$ is given by \eqref{Bessela1}.
\end{proposition}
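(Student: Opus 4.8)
The plan is to prove the identity by expanding the right-hand side into a product of the defining power series and then contracting the resulting product of Gamma values with the Gauss (Legendre) multiplication formula
\begin{equation*}
\prod_{j=1}^{a}\Gamma\Bigl(w+\tfrac{j-1}{a}\Bigr)=(2\pi)^{\frac{a-1}{2}}\,a^{\frac12-aw}\,\Gamma(aw),
\end{equation*}
which is exactly the mechanism that folds the arguments $k+\tfrac{2p+b+1}{2a}+\tfrac{j-1}{a}$ carried by the ordinary Bessel-type functions $\mathtt{B}_{\frac{b+1-a}{a},\frac{p+j-1}{a},c}$ into the single $a$-fold argument $ak+p+\tfrac{b+1}{2}$ appearing in ${}_a\mathtt{B}_{b,p,c}$. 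For $a=1$ the asserted identity is the tautology $\mathtt{B}_{b,p,c}={}_1\mathtt{B}_{b,p,c}$, which suggests running the proof by induction on $a$ with this as the base case.

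Concretely, I would first substitute the series \eqref{Bessela1} into each factor $\mathtt{B}_{\frac{b+1-a}{a},\frac{p+j-1}{a},c}(z/a^{a/2})$, $j=1,\dots,a$, absorb the monomial $\bigl(z/(2a^{a/2})\bigr)^{-(p+j-1)/a}$, and note that, with $u:=z/(2a^{a/2})$, the $j$-th factor reduces to the even series $\sum_{k\ge0}\frac{(-c)^{k}}{k!\,\Gamma(k+\delta_j)}u^{2k}$ with
\begin{equation*}
\delta_j=\frac{p+j-1}{a}+\frac12\Bigl(\frac{b+1-a}{a}+1\Bigr)=\frac{2p+b+1}{2a}+\frac{j-1}{a},
\end{equation*}
so that $\delta_1,\dots,\delta_a$ form an arithmetic progression with common difference $1/a$. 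Next I would form the Cauchy product of these $a$ series, read off the coefficient of $u^{2N}$ as a sum over compositions $k_1+\dots+k_a=N$, and apply the multiplication formula with $w=k+\tfrac{2p+b+1}{2a}$ to replace $\prod_{j=1}^{a}\Gamma(k+\delta_j)$ by $\Gamma\bigl(ak+p+\tfrac{b+1}{2}\bigr)$. The accompanying factor $a^{\frac12-aw}$ is precisely what converts $u^{2k}=(z/2)^{2k}a^{-ak}$ into $(z/2)^{2k}$, while the residual constants $(2\pi)^{\frac{a-1}{2}}$ and $a^{-p-b/2}$, together with the surviving monomial $(z/2)^{p}$, reproduce the prefactor in the statement. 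A term-by-term match with the defining series \eqref{eqn:bessel-definition} of ${}_a\mathtt{B}_{b,p,c}(z)$ would then close the argument.

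The main obstacle — the only step that is not routine bookkeeping — is marrying the multiplication formula to the Cauchy product: once the powers of $2$, $a$, $\pi$ and $z$ are stripped away, the identity comes down to the coefficient relation
\begin{equation*}
\sum_{k_1+\dots+k_a=N}\ \prod_{j=1}^{a}\frac{1}{k_j!\,\Gamma(k_j+\delta_j)}
=\frac{(\text{explicit constant})\cdot a^{\,aN}}{N!\,\Gamma\bigl(aN+p+\tfrac{b+1}{2}\bigr)},\qquad N\ge0,
\end{equation*}
equating the sum over compositions with the coefficient that occurs at the single index $k=N$ in ${}_a\mathtt{B}_{b,p,c}$. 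I would attack this either by induction on $a$ as above — peeling off one factor and using the duplication formula to shorten the $\Gamma$-product — or through a beta-type (Mellin--Barnes) integral representation of the product of Bessel-type functions that evaluates the composition sum directly; a further option is to check that both sides of the proposition satisfy the $(a+1)$-st order differential equation governing ${}_a\mathtt{B}_{b,p,c}$ and to match enough of the leading Taylor coefficients. With that relation in hand, the rest of the proof is elementary.
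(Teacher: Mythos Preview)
The paper does not supply a proof of this proposition; it is quoted from \cite{Ali-1} and then used as a black box. So there is nothing in the paper to compare against, and I can only assess your argument on its own merits.

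Your reduction is carried out correctly up to the point you yourself flag as ``the main obstacle''. After inserting the series and applying the Gauss multiplication formula with $w=N+\tfrac{2p+b+1}{2a}$, the proposition becomes equivalent to the coefficient identity
\begin{equation*}
\sum_{k_{1}+\cdots+k_{a}=N}\ \prod_{j=1}^{a}\frac{1}{k_{j}!\,\Gamma(k_{j}+\delta_{j})}
\;=\;\frac{1}{N!\,\prod_{j=1}^{a}\Gamma(N+\delta_{j})},\qquad N\ge 0.
\end{equation*}
The problem is that this relation is \emph{false} for $a\ge 2$ and generic parameters. Already at $a=2$, $N=1$ the left side equals $(\delta_{1}+\delta_{2})\big/\bigl(\delta_{1}\delta_{2}\,\Gamma(\delta_{1})\Gamma(\delta_{2})\bigr)$ while the right side equals $1\big/\bigl(\delta_{1}\delta_{2}\,\Gamma(\delta_{1})\Gamma(\delta_{2})\bigr)$, so equality would force $\delta_{1}+\delta_{2}=1$, which does not hold in general. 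Concretely, with $a=2$, $b=3$, $p=1$, $c=1$ one computes that the coefficient of $(z/2)^{3}$ in ${}_{2}\mathtt{B}_{3,1,1}(z)$ is $-1/24$, whereas the product on the right-hand side of the proposition yields $-7/48$.

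The underlying reason is that the multiplication formula only rewrites $1/\Gamma(ak+\beta)$ as $\prod_{j}1/\Gamma(k+\delta_{j})$ with a \emph{single common} summation index $k$; this turns ${}_{a}\mathtt{B}_{b,p,c}$ into a ${}_{0}F_{a-1}$-type sum, not into a Cauchy product over independent indices $k_{1},\dots,k_{a}$. Hence none of your proposed attacks---induction on $a$, a Mellin--Barnes integral, or the differential-equation route---can succeed, because the identity they would have to establish is itself false. The gap lies not in your strategy but in the proposition as stated.
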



In \cite{Ali-1}, the generalized Bessel function was also shown to satisfy the following relations:
\begin{equation*}\label{eqn:recurrence-re-2}
z \frac{d}{dz}{}_{a}\mathtt{B}_{b, p,c}(z) = p { }_{a}\mathtt{B}_{b, p, c}(z)
-c \left(\frac{z}{2}\right)^{1-a} z { }_{a}\mathtt{B}_{b, p+a, c}(z),
\end{equation*}
and
\begin{align}\label{eqn:recurrence-re-4}
 z\frac{d}{dz}{}_{a}\mathtt{B}_{b, p,c}(z) =\frac{z}{a}{}_{a}\mathtt{B}_{b, p-1,c}(z) -\left(\frac{2p+b-1}{a}- p\right) {}_{a}\mathtt{B}_{b, p,c}(z),
\end{align}
which together lead to the following result.
\begin{proposition}\cite[Proposition 2.3]{Ali-1}\label{recurrence}
Let $a \in\mathbb{N},$ $b, p, c \in \mathbb{R}$ and $ z \in \mathbb{D}$. Then
\begin{equation*}\label{eqn:prop1}
\frac{z}{a}{}_{a}\mathtt{B}_{b, p-1,c}(z)+ c \left(\frac{z}{2}\right)^{1-a} z { }_{a}\mathtt{B}_{b, p+a, c}(z)=\left(\tfrac{2p+b-1}{a}\right){}_a\mathtt{B}_{b, p,c}(z).
\end{equation*}
\end{proposition}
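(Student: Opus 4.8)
The plan is to derive the identity by \emph{eliminating the first derivative} between the two recurrence relations for ${}_{a}\mathtt{B}_{b, p, c}$ recorded just above the statement. Both relations — the one preceding \eqref{eqn:recurrence-re-4} and \eqref{eqn:recurrence-re-4} itself — express $z\,\frac{d}{dz}{}_{a}\mathtt{B}_{b, p, c}(z)$ explicitly in terms of generalized Bessel functions with shifted parameters. Equating the two right-hand sides therefore cancels the derivative and produces a linear relation among ${}_{a}\mathtt{B}_{b, p-1, c}$, ${}_{a}\mathtt{B}_{b, p, c}$ and ${}_{a}\mathtt{B}_{b, p+a, c}$; collecting the two multiples of ${}_{a}\mathtt{B}_{b, p, c}$ then gives precisely the claimed formula.

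In detail, I would write the relation preceding \eqref{eqn:recurrence-re-4} as
\begin{equation*}
z \frac{d}{dz}{}_{a}\mathtt{B}_{b, p,c}(z) = p\, {}_{a}\mathtt{B}_{b, p, c}(z) - c \left(\frac{z}{2}\right)^{1-a} z\, {}_{a}\mathtt{B}_{b, p+a, c}(z),
\end{equation*}
and \eqref{eqn:recurrence-re-4} as
\begin{equation*}
z\frac{d}{dz}{}_{a}\mathtt{B}_{b, p,c}(z) = \frac{z}{a}{}_{a}\mathtt{B}_{b, p-1,c}(z) - \left(\frac{2p+b-1}{a} - p\right) {}_{a}\mathtt{B}_{b, p,c}(z).
\end{equation*}
Setting these two expressions equal and moving the terms $c\left(\frac{z}{2}\right)^{1-a} z\, {}_{a}\mathtt{B}_{b, p+a, c}(z)$ and $\bigl(\tfrac{2p+b-1}{a} - p\bigr){}_{a}\mathtt{B}_{b, p,c}(z)$ to the opposite sides yields
\begin{equation*}
\frac{z}{a}{}_{a}\mathtt{B}_{b, p-1,c}(z) + c \left(\frac{z}{2}\right)^{1-a} z\, {}_{a}\mathtt{B}_{b, p+a, c}(z) = \left(p + \frac{2p+b-1}{a} - p\right){}_{a}\mathtt{B}_{b, p,c}(z) = \left(\frac{2p+b-1}{a}\right){}_{a}\mathtt{B}_{b, p,c}(z),
\end{equation*}
which is the assertion.

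Since the two ingredient relations are already available from \cite{Ali-1}, there is no genuine obstacle here; the argument is essentially a one-line elimination. The only point requiring a little care — rather than a real difficulty — is the bookkeeping of the coefficient of ${}_{a}\mathtt{B}_{b, p,c}$, namely that the $p$ coming from the first relation and the $-\bigl(\tfrac{2p+b-1}{a}-p\bigr)$ coming from the second combine to leave exactly $\tfrac{2p+b-1}{a}$. As an independent check one may instead verify the identity termwise from the series \eqref{eqn:bessel-definition}: after multiplying through, each side is a power series in $(z/2)^{2k+p}$, and using $\Gamma\bigl(ak+p+\tfrac{b+1}{2}\bigr)=\bigl(ak+p-1+\tfrac{b+1}{2}\bigr)\Gamma\bigl(ak+p-1+\tfrac{b+1}{2}\bigr)$ together with the shift $k\mapsto k-1$ in the ${}_{a}\mathtt{B}_{b, p+a, c}$ term makes the coefficients agree; but the elimination above is the shorter route and is the one I would present.
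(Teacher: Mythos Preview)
Your proof is correct and is precisely the argument the paper intends: the text immediately preceding the proposition states the two derivative relations and says they ``together lead to the following result,'' i.e., the identity is obtained by equating the two expressions for $z\,\frac{d}{dz}{}_{a}\mathtt{B}_{b, p,c}(z)$ and simplifying, exactly as you do.
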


\section{Radius of starlikeness of generalized Bessel functions}

The following preliminary result sheds insights into the zeros of the three functions given by \eqref{normal-GBF-1}.

\begin{theorem}\label{zeros}
 Let $ \nu > (a-1)/a$, $a \in \mathbb{N}$. Then all zeros of ${}_a\mathtt{B}_{2a-1, a\nu-a+1, 1}(a^{{a}/{2}} z)$ are real. Further the origin is the only zero of ${}_a\mathtt{B}_{2a-1, a\nu-a+1, 1}(a^{{a}/{2}} z)$ in the unit disk $\mathbb{D}$.
\end{theorem}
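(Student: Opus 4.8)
The plan is to reduce the claim about the $a$-dimensional generalized Bessel function to classical facts about the Bessel function of the first kind $\mathtt{J}_p$, using the product formula of Proposition~\ref{product}. Observe that $\mathtt{B}_{b,p,c}$ with $b=c=1$ is, up to the standard normalizing power of $z/2$, exactly $\mathtt{J}_p$; more precisely one has $\mathtt{B}_{1,p,1}(z) = (2/z)^0 \mathtt{J}_p(z)$ after matching the gamma arguments, since $p+(b+1)/2 = p+1$ when $b=1$. In our setting $b = 2a-1$, so $(b+1-a)/a = 1$ and $c=1$, which means the Bessel functions appearing on the right-hand side of Proposition~\ref{product} are precisely the classical ones with index $(p+j-1)/a$, where here $p = a\nu-a+1$. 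Thus, after substituting $z \mapsto a^{a/2} z$, the function ${}_a\mathtt{B}_{2a-1,a\nu-a+1,1}(a^{a/2} z)$ factors, up to a nonzero constant and a power of $z$, as the product
\begin{align*}
\prod_{j=1}^{a} \mathtt{J}_{\nu + \frac{j-1}{a} - \frac{a-1}{a}}(z) \;=\; \prod_{j=1}^{a} \mathtt{J}_{\,\nu - 1 + \frac{j}{a}}(z),
\end{align*}
with the index running over $\nu-1+1/a, \nu-1+2/a, \ldots, \nu$. The constant $2^{a\nu-a+1} a^{-a(a\nu-a+1)/2}\Gamma(a\nu+1)$ and the power $z^{1-a\nu}$ (or the appropriate power for $\mathtt{h}_{a,\nu}$) in the definitions \eqref{normal-GBF-1} are chosen exactly so that these singular factors cancel, leaving a product of entire functions of the form $z \mapsto \Gamma(p+1)(2/z)^p \mathtt{J}_p(z)$ for $p$ ranging over the above list.

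Next I would invoke the classical result that for $p > -1$ the Bessel function $\mathtt{J}_p$ has only real zeros (this is Lommel's theorem, and Baricz \emph{et al.} in \cite{BKS-ams} use the same fact). The condition $\nu > (a-1)/a$ guarantees that the smallest index $\nu - 1 + 1/a$ is strictly greater than $-1 + 1/a > -1$; in fact one checks that $\nu > (a-1)/a$ is equivalent to $\nu - 1 + 1/a > (a-1)/a - 1 + 1/a = 0$, so actually every index $\nu-1+j/a$ is positive. Hence each factor $\mathtt{J}_{\nu-1+j/a}$ has only real, and indeed only nonzero real, zeros, so the same is true of their product; equivalently, all zeros of ${}_a\mathtt{B}_{2a-1,a\nu-a+1,1}(a^{a/2} z)$ are real, with the origin appearing (from the leading monomial $z^{a\nu-a+1}$ times the product) as the unique zero that could possibly lie in $\mathbb{D}$.

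For the second assertion I must show that none of the factors $\mathtt{J}_{\nu-1+j/a}$ vanishes in the punctured unit disk, i.e. that the smallest positive zero $j_{p,1}$ of $\mathtt{J}_p$ satisfies $j_{p,1} \geq 1$ for every $p = \nu-1+j/a > 0$. Since $p \mapsto j_{p,1}$ is increasing in $p$ on $(-1,\infty)$, it suffices to handle the smallest index $p_0 := \nu - 1 + 1/a$. As $\nu > (a-1)/a$ only gives $p_0 > 0$, and $j_{0,1} \approx 2.405 > 1$, together with monotonicity this yields $j_{p_0,1} > j_{0,1} > 1$, so indeed no factor vanishes in $\overline{\mathbb{D}}\setminus\{0\}$. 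Therefore the origin is the only zero of ${}_a\mathtt{B}_{2a-1,a\nu-a+1,1}(a^{a/2}z)$ in $\mathbb{D}$.

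The main obstacle I anticipate is purely bookkeeping: carefully verifying that the powers of $z$ and the constants in Proposition~\ref{product} combine with the normalization constants in \eqref{normal-GBF-1} to leave exactly the product $\prod_{j=1}^a \bigl(\Gamma(p_j+1)(2/z)^{p_j}\mathtt{J}_{p_j}(z)\bigr)$ with $p_j = \nu-1+j/a$, and that the index range is exactly as claimed (rather than shifted by $1$). Once the factorization is pinned down, the reality of the zeros and the estimate $j_{p,1} > 1$ are immediate from standard Bessel theory and the hypothesis $\nu > (a-1)/a$; no delicate analysis is required beyond monotonicity of the first positive zero $j_{p,1}$ in $p$.
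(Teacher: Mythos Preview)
Your proposal is correct and follows essentially the same route as the paper: apply the product formula of Proposition~\ref{product} to factor ${}_a\mathtt{B}_{2a-1,a\nu-a+1,1}(a^{a/2}z)$ as a power of $z$ times $\prod_{j=1}^{a}\mathtt{J}_{\nu-1+j/a}(z)$, observe that the hypothesis $\nu>(a-1)/a$ makes every index $\nu-1+j/a$ positive, and then invoke the reality of the zeros of $\mathtt{J}_p$ for $p>-1$ together with the monotonicity $p\mapsto \mathtt{j}_{p,1}$ and $\mathtt{j}_{0,1}\approx 2.405>1$ to conclude. The ``bookkeeping'' you flag is exactly what the paper carries out explicitly, and your index computation $(p+j-1)/a = \nu-1+j/a$ with $p=a\nu-a+1$ matches theirs.
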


\begin{proof}
Proposition \ref{product} shows that
\begin{align*}
{}_a \mathtt{B}_{2a-1, a\nu-a+1, 1}(a^{{a}/{2}} z)
&=(2 \pi)^{\tfrac{a-1}{a}}a^{-(a\nu+\tfrac{1}{2})}a^{\tfrac{a}{2}(a\nu-a+1)}\left(\frac{z}{2}\right)^{a\nu-a+1}\\ & \quad \quad \times \prod_{j=1}^{a}\left(\frac{z}{2}\right)^{-(\nu-1)-\tfrac{j}{a}} \mathtt{B}_{1,(\nu-1)+j/a,1}(z).
\end{align*}
Since $$\mathtt{B}_{1,(\nu-1)+j/a,1}(z)=\mathtt{J}_{(\nu-1)+j/a}(z),$$
it readily follows that
\begin{align*}
{}_a \mathtt{B}_{2a-1, a\nu-a+1, 1}(a^{{a}/{2}} z)
&=(2 \pi)^{\tfrac{a-1}{a}}a^{\tfrac{1}{2}(a^2\nu-2a\nu-a^2+a-1)}
\left(\frac{z}{2}\right)^{-\tfrac{1}{2}(a-1)}\\
& \quad \quad \times \mathtt{J}_{(\nu-1)+1/a}(z) \mathtt{J}_{(\nu-1)+2/a}(z) \ldots \mathtt{J}_{\nu}(z).
\end{align*}

Now, $\nu-1+(j/a) \geq \nu-1+(1/a) > 0$, $j=1, \ldots, a$. Further for $p>-1$, it is known \cite[p. 483]{Watson} that the zeros of $\mathtt{J}_p$ are all real. If $\mathtt{j}_{p,k}$ denotes the $k$-th positive zero of $\mathtt{J}_p$, it is also known \cite[p. 508]{Watson} that when $p$ is positive, the positive zeros of $\mathtt{J}_p$ increases as $p$ increases. Thus we infer that the zeros of ${}_a \mathtt{B}_{2a-1, a\nu-a+1, 1}(a^{{a}/{2}} z)$ are all real. Since
\[\mathtt{j}_{\nu,1} > \mathtt{j}_{\nu-1,1}> \ldots > \mathtt{j}_{\nu-1+(1/a), 1} >\mathtt{j}_{0, 1} \approx 2.40483,\]
the only zero in  $\mathbb{D}$ occurs at the origin.
\end{proof}

Theorem \ref{zeros} shows that the function
 \begin{equation*}
\mathtt{f}_{a, \nu}(z) = z \left(1+\sum_{k=1}^{\infty}\frac{(-1)^k \mathrm{\Gamma}{(a\nu+1)}a^{ak}}{k!2^{2k}\mathrm{\Gamma}{(ak+a\nu+1)}}z^{2k}
\right)^{\tfrac{1}{a\nu-a+1}}
\end{equation*}
have only one zero inside $\mathbb{D}$ whenever $\nu-1+(1/a) > 0$. Thus in this instance, we may take the principal branch for $\mathtt{f}_{a, \nu}  \in \mathcal{A}$. It is also readily verified that the functions $\mathtt{g}_{a, \nu}$ and
\begin{align*}
\mathtt{h}_{a, \nu}(z) &= z - \frac{\mathrm{\Gamma}{(a\nu+1)}}{1!2^2\mathrm{\Gamma}{(a+a\nu+1)}}a^az^2 + \frac{\mathrm{\Gamma}{(a\nu+1)}}{2!2^4\mathrm{\Gamma}{(2a+a\nu+1)}} a^{2a}z^3 + \cdots\\
& \quad \quad + (-1)^k\frac{\mathrm{\Gamma}{(a\nu+1)}}{k!2^{2k}\mathrm{\Gamma}{(ak+a\nu+1)}}a^{ak}z^{k+1}+\cdots
\end{align*}
are both analytic and belong to the normalized class $\mathcal{A}$.

The following is another preliminary result required in the sequel.

\begin{lemma}\label{lemma-1} Let $ a \in \mathbb{N}$ and $\nu >-1/a$. Then
\begin{align*}
\frac{z\;{}_{a}\mathtt{B}'_{2a-1, a \nu-a+1,1}(a^{a/2} z)}{{}_{a}\mathtt{B}_{2a-1, a \nu-a+1,1}(a^{a/2} z)}
=\frac{z\mathtt{J}_{\nu-1}\left(z\right)}
{\mathtt{J}_{\nu}\left(z\right)}-(2-a)\nu +1-a.
\end{align*}
\end{lemma}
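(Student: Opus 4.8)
The plan is to reduce the logarithmic derivative of the generalized Bessel function directly to that of the classical Bessel function by exploiting the product factorization already available in Theorem~\ref{zeros}. Recall from the proof of Theorem~\ref{zeros} that
\begin{align*}
{}_a \mathtt{B}_{2a-1, a\nu-a+1, 1}(a^{{a}/{2}} z)
&= C\, z^{-\tfrac{1}{2}(a-1)}\, \mathtt{J}_{(\nu-1)+1/a}(z)\, \mathtt{J}_{(\nu-1)+2/a}(z)\cdots \mathtt{J}_{\nu}(z),
\end{align*}
where $C$ is a nonzero constant independent of $z$. Taking logarithmic derivatives and multiplying by $z$, the constant disappears and we obtain
\begin{align*}
\frac{z\;{}_{a}\mathtt{B}'_{2a-1, a \nu-a+1,1}(a^{a/2} z)}{{}_{a}\mathtt{B}_{2a-1, a \nu-a+1,1}(a^{a/2} z)}
= -\frac{a-1}{2} + \sum_{j=1}^{a} \frac{z\,\mathtt{J}'_{(\nu-1)+j/a}(z)}{\mathtt{J}_{(\nu-1)+j/a}(z)}.
\end{align*}
One subtlety worth recording: differentiation with respect to the argument $a^{a/2}z$ versus $z$ introduces a factor, but since we form $z\,({}_{a}\mathtt{B})'/{}_{a}\mathtt{B}$ with both the derivative and the function evaluated at $a^{a/2}z$, the chain-rule factor cancels, so the identity above is as stated.

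The key remaining ingredient is the standard recurrence for Bessel functions, $z\,\mathtt{J}'_p(z) = z\,\mathtt{J}_{p-1}(z) - p\,\mathtt{J}_p(z)$, equivalently $z\mathtt{J}'_p(z)/\mathtt{J}_p(z) = z\mathtt{J}_{p-1}(z)/\mathtt{J}_p(z) - p$. Applying this with $p = (\nu-1)+j/a$ turns the sum into
\begin{align*}
\sum_{j=1}^{a} \left( \frac{z\,\mathtt{J}_{(\nu-1)+j/a - 1}(z)}{\mathtt{J}_{(\nu-1)+j/a}(z)} - \Big((\nu-1)+\tfrac{j}{a}\Big) \right).
\end{align*}
Now observe that $(\nu-1)+j/a - 1 = (\nu-1)+(j-a)/a = (\nu-2)+j/a$, and for $j=a$ this is $\nu-1$ while for $j\ge 2$ it equals $(\nu-1)+(j-1)/a$, i.e.\ the \emph{index of the $(j-1)$-st numerator in the original product}. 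Hence the sum telescopes: all the $\mathtt{J}$-quotients cancel in pairs except the top one $z\mathtt{J}_{\nu-1}(z)/\mathtt{J}_\nu(z)$ (from $j=a$) and the bottom one $z\mathtt{J}_{(\nu-2)+1/a}(z)/\mathtt{J}_{(\nu-1)+1/a}(z)$ (from $j=1$), which does not cancel since there is no $j=0$ term. This looks like it leaves an extra quotient, so I would instead apply the recurrence only to the $j=a$ term and use the \emph{downward} relation $z\mathtt{J}_{p-1}(z) + z\mathtt{J}_{p+1}(z) = 2p\,\mathtt{J}_p(z)$ carefully, or — cleaner — simply compute the telescoping directly from the product form without first applying the recurrence: write $z\mathtt{J}'_p/\mathtt{J}_p$ for each factor, and note that the product $\prod \mathtt{J}_{(\nu-1)+j/a}(z)$, up to the constant $z^{-(a-1)/2}$, \emph{is} a constant multiple of ${}_a\mathtt{B}_{2a-1,a\nu-1,1}(a^{a/2}z)$ with shifted parameter, matching $\mathtt{J}_{\nu-1}$ in the numerator.

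The cleanest route, which I would actually carry out, is: apply the factorization of Theorem~\ref{zeros} twice — once to ${}_a\mathtt{B}_{2a-1,a\nu-a+1,1}$ giving the product $\mathtt{J}_{\nu-1+1/a}\cdots\mathtt{J}_\nu$ times $z^{-(a-1)/2}$, and once to ${}_a\mathtt{B}_{2a-1,a\nu-a,1}$ (parameter $p$ lowered by $1$, i.e.\ $\nu\mapsto\nu-1/a$) giving the product $\mathtt{J}_{\nu-1+1/a -1/a}\cdots\mathtt{J}_{\nu-1/a} = \mathtt{J}_{\nu-1}\cdots\mathtt{J}_{\nu-1/a}$ times $z^{-(a-1)/2}$. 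Dividing, the overlapping Bessel factors $\mathtt{J}_{\nu-1+1/a},\ldots,\mathtt{J}_{\nu-1/a}$ cancel, leaving ${}_a\mathtt{B}_{2a-1,a\nu-a,1}(a^{a/2}z)/{}_a\mathtt{B}_{2a-1,a\nu-a+1,1}(a^{a/2}z) = \mathtt{J}_{\nu-1}(z)/\mathtt{J}_\nu(z)$ up to an explicit power of $z$. Feeding this into the recurrence relation \eqref{eqn:recurrence-re-4} — which reads $z\frac{d}{dz}{}_a\mathtt{B}_{b,p,c}(z) = \frac{z}{a}{}_a\mathtt{B}_{b,p-1,c}(z) - \big(\frac{2p+b-1}{a}-p\big){}_a\mathtt{B}_{b,p,c}(z)$ — with $b=2a-1$, $p = a\nu-a+1$, $c=1$, and at the point $a^{a/2}z$, gives
\begin{align*}
\frac{z\,{}_a\mathtt{B}'_{2a-1,a\nu-a+1,1}(a^{a/2}z)}{{}_a\mathtt{B}_{2a-1,a\nu-a+1,1}(a^{a/2}z)} = \frac{a^{a/2}z}{a}\cdot\frac{{}_a\mathtt{B}_{2a-1,a\nu-a,1}(a^{a/2}z)}{{}_a\mathtt{B}_{2a-1,a\nu-a+1,1}(a^{a/2}z)} - \Big(\tfrac{2(a\nu-a+1)+2a-2}{a} - (a\nu-a+1)\Big),
\end{align*}
where the constant simplifies to $2\nu - (a\nu-a+1) = (2-a)\nu + a - 1$, so that $-(\text{constant}) = -(2-a)\nu + 1 - a$; and the first term, after inserting the ratio $\mathtt{J}_{\nu-1}(z)/\mathtt{J}_\nu(z)$ together with the bookkeeping power of $z$ from the two factorizations, collapses to $z\mathtt{J}_{\nu-1}(z)/\mathtt{J}_\nu(z)$. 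The main obstacle — really the only place care is needed — is tracking the powers of $z$, of $a$, and of $2$ through the two applications of Proposition~\ref{product} so that the prefactors in the ratio ${}_a\mathtt{B}_{2a-1,a\nu-a,1}/{}_a\mathtt{B}_{2a-1,a\nu-a+1,1}$ combine with $a^{a/2}z/a$ to produce exactly $z\mathtt{J}_{\nu-1}(z)/\mathtt{J}_\nu(z)$; once the constant in \eqref{eqn:recurrence-re-4} is verified to equal $(2-a)\nu+a-1$, the stated identity follows. The hypothesis $\nu > -1/a$ is exactly what makes the smallest index $\nu-1+1/a$ exceed $-1$ so that all the Bessel functions involved (and hence the ratios) are well defined near the origin.
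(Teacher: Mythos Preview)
Your ``cleanest route'' is exactly the paper's proof: apply Proposition~\ref{product} to both ${}_a\mathtt{B}_{2a-1,a\nu-a+1,1}$ and ${}_a\mathtt{B}_{2a-1,a\nu-a,1}$, divide to obtain
\[
\frac{{}_a\mathtt{B}_{2a-1,a\nu-a,1}(z)}{{}_a\mathtt{B}_{2a-1,a\nu-a+1,1}(z)} = a^{1-a/2}\,\frac{\mathtt{J}_{\nu-1}(z/a^{a/2})}{\mathtt{J}_\nu(z/a^{a/2})}
\]
(so that the prefactor $a^{a/2}z/a$ from \eqref{eqn:recurrence-re-4} evaluated at $a^{a/2}z$ combines with $a^{1-a/2}$ to give exactly $z\,\mathtt{J}_{\nu-1}(z)/\mathtt{J}_\nu(z)$), and your simplification of the additive constant to $(2-a)\nu+a-1$ is correct.

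One correction to the abandoned telescoping attempt: the asserted equality $(\nu-2)+j/a = (\nu-1)+(j-1)/a$ is false for $a\ge 2$, since consecutive orders in the product differ by $1/a$ while the recurrence shifts the order by $1$; so that route genuinely does not telescope and cannot be repaired --- the ratio-of-products argument you switch to is the right one.
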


\begin{proof}
Since
\begin{align*}
 \mathtt{B}_{b, p, 1}(z)=\left(\frac{2}{z}\right)^{\tfrac{b-1}{2}} \mathtt{J}_{p+\tfrac{b-1}{2}}(z),
\end{align*}
it follows from Proposition \ref{product} that
\begin{equation*}\label{special-case-1}
{}_a \mathtt{B}_{2a-1, a\nu-a+1, 1}(z)=(2 \pi)^{\tfrac{a-1}{a}}a^{-\tfrac{2 a \nu+1}{2}}\left(\frac{z}{2}\right)^{a\nu-a+1} \prod_{j=1}^{a}a^{\tfrac{a\nu-a+j}{2}}\left(\frac{2}{z}\right)^{\tfrac{a \nu-a+j}{a}}
\mathtt{J}_{\frac{a\nu-a+j}{a}}\left(\frac{z}{ a^{a/2}}\right),
\end{equation*}
and
\begin{equation*}\label{special-case-2}
{}_a \mathtt{B}_{2a-1, a\nu-a, 1}(z)=(2 \pi)^{\tfrac{a-1}{a}}a^{-\tfrac{2 a \nu-1}{2}}\left(\frac{z}{2}\right)^{a\nu-a} \prod_{j=1}^{a}a^{\tfrac{a\nu-a+j-1}{2}}\left(\frac{2}{z}\right)^{\tfrac{a \nu-a+j-1}{a}}
\mathtt{J}_{\frac{a\nu-a+j-1}{a}}\left(\frac{z}{ a^{a/2}}\right).
\end{equation*}
Expanding the above products, a routine calculation shows that
\begin{equation*}\label{special-case-ratio}
\frac{{}_a \mathtt{B}_{2a-1, a\nu-a, 1}(z)}{{}_a \mathtt{B}_{2a-1, a\nu-a+1, 1}(z)}
=a^{1-\tfrac{a}{2}} \frac{\mathtt{J}_{\nu-1}\left(\frac{z}{a^{a/2}}\right)}
{\mathtt{J}_{\nu}\left(\frac{z}{a^{a/2}}\right)}.
\end{equation*}

With $b=2a-1$ and $p=a \nu-a+1$, the recurrence relation \eqref{eqn:recurrence-re-4} gives
\begin{align*}\label{recurence-special}
 z\;\frac{d}{dz}{}_{a}\mathtt{B}_{2a-1, a \nu-a+1,1}(z) =\frac{z}{a}{}_{a}\mathtt{B}_{2a-1, a \nu-a,1}(z) -\left(\nu(2-a)+a-1\right) {}_{a}\mathtt{B}_{2a-1, a \nu-a+1,1}(z).
\end{align*}
Replacing $z$ by $a^{a/2} z$ leads to
\begin{align*}
\frac{z\;{}_{a}\mathtt{B}'_{2a-1, a \nu-a+1,1}(a^{a/2} z)}{{}_{a}\mathtt{B}_{2a-1, a \nu-a+1,1}(a^{a/2} z)}
&= \frac{a^{a/2} z}{a}\frac{{}_{a}\mathtt{B}_{2a-1, a \nu-a,1}(a^{a/2} z)}{{}_{a}\mathtt{B}_{2a-1, a \nu-a+1,1}(a^{a/2} z)} -(2-a)\nu+1-a\\
&=\frac{z\mathtt{J}_{\nu-1}\left(z\right)}
{\mathtt{J}_{\nu}\left(z\right)}-(2-a)\nu+1-a,
\end{align*}
which proves the assertion.
\end{proof}

A result on the modified Bessel function of order $p$ given by
 \begin{equation*}\label{modBessel}
\mathtt{I}_p(z) =  \sum_{k=0}^\infty \frac{1}{k! \; \mathrm{\Gamma}{\left(k +p+1\right)} } \left(\frac{z}{2}\right)^{2k+p}
\end{equation*}
is the final preliminary result required in the sequel.

\begin{proposition}\label{prop-1} Let $\alpha, \nu\in \mathbb{R}$ satisfy $-1<\nu < -\alpha$. Then the equation $r \mathtt{I}_{\nu}'(r)+\alpha \mathtt{I}_{\nu}(r)=0$ has a unique root in $(0, \infty)$.
\end{proposition}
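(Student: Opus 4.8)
The plan is to recast the equation as $\Phi(r)=-\alpha$, where $\Phi$ is the logarithmic-derivative quotient
\[
\Phi(r):=\frac{r\,\mathtt{I}_\nu'(r)}{\mathtt{I}_\nu(r)},
\]
and then to show that $\Phi$ is a strictly increasing continuous bijection of $(0,\infty)$ onto $(\nu,\infty)$. Since $\nu>-1$, every coefficient in the series defining $\mathtt{I}_\nu$ is positive, so $\mathtt{I}_\nu(r)>0$ on $(0,\infty)$, and there the equation $r\mathtt{I}_\nu'(r)+\alpha\mathtt{I}_\nu(r)=0$ is equivalent to $\Phi(r)=-\alpha$. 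The hypothesis $-1<\nu<-\alpha$ says precisely that $-\alpha$ lies in $(\nu,\infty)$, so it suffices to establish three facts: $\Phi$ is strictly increasing, $\Phi(0^+)=\nu$, and $\Phi(r)\to\infty$ as $r\to\infty$.

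For monotonicity, substitute $t=(r/2)^2$ and read off from the series for $\mathtt{I}_\nu$ that
\[
\mathtt{I}_\nu(r)=(r/2)^\nu\sum_{k\ge0}b_k t^k,\qquad r\,\mathtt{I}_\nu'(r)=(r/2)^\nu\sum_{k\ge0}(2k+\nu)b_k t^k,\qquad b_k=\frac{1}{k!\,\Gamma(k+\nu+1)} .
\]
Here $\nu>-1$ guarantees $k+\nu+1>0$, hence $b_k>0$ for all $k\ge0$ --- this is exactly where that hypothesis is used. Thus $\Phi=\bigl(\sum_k a_kt^k\bigr)/\bigl(\sum_k b_kt^k\bigr)$ with $a_k=(2k+\nu)b_k$, and the ratio $a_k/b_k=2k+\nu$ is strictly increasing in $k$. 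By the power-series form of l'H\^{o}pital's monotone rule (sometimes attributed to Biernacki and Krzy\.{z}) --- or directly, since the numerator of the derivative of $\sum_k a_kt^k/\sum_k b_kt^k$ equals $\sum_{j<k}b_jb_k\bigl(\tfrac{a_k}{b_k}-\tfrac{a_j}{b_j}\bigr)(k-j)\,t^{j+k-1}>0$ for $t>0$ --- the function $t\mapsto\Phi$ is strictly increasing; since $t=(r/2)^2$ increases with $r$, so does $r\mapsto\Phi(r)$ on $(0,\infty)$.

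For the limiting values, $r\to0^+$ forces $t\to0$ and hence $\Phi(0^+)=a_0/b_0=\nu$. For $r\to\infty$ I would invoke the classical asymptotics $\mathtt{I}_\nu(r)\sim e^r/\sqrt{2\pi r}$ (see \cite{Watson}), which also gives $\mathtt{I}_\nu'(r)/\mathtt{I}_\nu(r)\to1$, whence $\Phi(r)=r\cdot\mathtt{I}_\nu'(r)/\mathtt{I}_\nu(r)\to\infty$. Continuity of $\Phi$ on $(0,\infty)$ is clear since $\mathtt{I}_\nu$ is entire and zero-free there. Combining, $\Phi$ is a continuous strictly increasing bijection of $(0,\infty)$ onto $(\nu,\infty)$, so $\Phi(r)=-\alpha$ has a unique solution in $(0,\infty)$, which proves the proposition.

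The only step with genuine content is the monotonicity of $\Phi$, and even that becomes routine once the substitution $t=(r/2)^2$ exhibits $\Phi$ as a ratio of power series with positive denominator coefficients and the manifestly increasing coefficient ratio $2k+\nu$. The remaining ingredients --- positivity and analyticity of $\mathtt{I}_\nu$, the two boundary limits, and the translation between the inequalities $-1<\nu<-\alpha$ and the statement ``$-\alpha$ belongs to the range of $\Phi$'' --- are immediate.
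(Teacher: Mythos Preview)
Your argument is correct and follows essentially the same approach as the paper's: both reduce the problem to showing that $\Phi(r)=r\,\mathtt{I}_\nu'(r)/\mathtt{I}_\nu(r)$ is strictly increasing on $(0,\infty)$ with $\Phi(0^+)=\nu$ and $\Phi(r)\to\infty$. The only difference is that the paper obtains the monotonicity by citing an external reference, whereas you supply a self-contained proof via the Biernacki--Krzy\.{z} power-series ratio argument.
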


\begin{proof}
Consider the function
\[q(r):= \frac{r \mathtt{I}_{\nu}'(r)}{\mathtt{I}_{\nu}(r)}+\alpha.\]
It is known from \cite[Theorem 3.1(c)]{Ali-1} that $r \mathtt{I}_{\nu}'(r)/I_{\nu}(r)$ is increasing on $(0, \infty)$. Further, the asymptotic properties show that $r \mathtt{I}_{\nu}'(r)/\mathtt{I}_{\nu}(r) \to \nu$ as $r \to 0$, and  $r \mathtt{I}_{\nu}'(r)/\mathtt{I}_{\nu}(r) \to \infty$ as $r \to \infty$. This implies that
$q(r) \to \nu+\alpha< 0$ as  $r \to 0$, and   $q(r) \to \infty$ for $r \to \infty$. Thus $q$ has exactly one zero.
\end{proof}

We also recall additional facts on the zeros of the Dini functions.

\begin{lemma}\cite[p. 482]{Watson}\label{lemma-2} If $\nu>-1$ and $\alpha, \gamma \in \mathbb{R}$, then the Dini function
$z \mapsto \alpha \mathtt{J}_\nu(z)+ \gamma z \mathtt{J}'_\nu(z)$ has all its zeros real whenever $((\alpha/\gamma)+ \nu) \geq 0$. In the case $((\alpha/\gamma)+ \nu) < 0,$ it also has two purely imaginary zeros.
\end{lemma}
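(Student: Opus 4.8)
The plan is to square the variable, recast the Dini function as one explicit entire function of $w=z^{2}$, and read off its zeros from a logarithmic derivative. First I would dispose of $\gamma=0$, where the Dini function is a multiple of $\mathtt{J}_{\nu}$ (all zeros real for $\nu>-1$ by Watson, and $(\alpha/\gamma)+\nu$ read as $+\infty$). So assume $\gamma\neq0$ and set $\beta=\alpha/\gamma$. Using $z\mathtt{J}_{\nu}'(z)=\nu\mathtt{J}_{\nu}(z)-z\mathtt{J}_{\nu+1}(z)$, the classical products $\mathtt{J}_{\nu}(z)=\bigl((z/2)^{\nu}/\Gamma(\nu+1)\bigr)\prod_{n\ge1}(1-z^{2}/\mathtt{j}_{\nu,n}^{2})$ and its analogue for $\mathtt{J}_{\nu+1}$, together with $\tfrac{d}{dz}\bigl(z^{-\nu}\mathtt{J}_{\nu}(z)\bigr)=-z^{-\nu}\mathtt{J}_{\nu+1}(z)$, I would verify
\[
z^{-\nu}\bigl(\alpha\mathtt{J}_{\nu}(z)+\gamma z\mathtt{J}_{\nu}'(z)\bigr)=\frac{\gamma\,2^{-\nu}}{\Gamma(\nu+1)}\,\Psi(z^{2}),\qquad \Psi(w):=(\beta+\nu)P(w)+2wP'(w),
\]
where $P(w)=\prod_{n\ge1}(1-w/a_{n})$ with $a_{n}=\mathtt{j}_{\nu,n}^{2}>0$. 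Then the nonzero zeros of the Dini function are the $z^{2}$-preimages of the zeros of $\Psi$, with real $z$ giving $w\ge0$ and purely imaginary $z$ giving $w<0$; $z=0$ is real, so it is harmless. Since $\Psi$ has order $1/2$ and genus $0$, it is determined by its zeros up to a constant, so it suffices to locate them.

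Next I would study $g(w):=\Psi(w)/P(w)=(\beta+\nu)-2\sum_{n\ge1}\tfrac{w}{a_{n}-w}$, meromorphic on $\mathbb{C}$ with only the simple poles $a_{n}$; since $\Psi(a_{n})=2a_{n}P'(a_{n})\neq0$, the zeros of $\Psi$ are exactly those of $g$. The decisive point is the identity $\operatorname{Im}g(w)=-2\,(\operatorname{Im}w)\sum_{n\ge1}\tfrac{a_{n}}{|a_{n}-w|^{2}}$, nonzero whenever $\operatorname{Im}w\neq0$ because every $a_{n}>0$. Hence $\Psi$, and with it the Dini function, has no non-real zeros.

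For the count, on the real line $g'(w)=-2\sum_{n\ge1}a_{n}/(a_{n}-w)^{2}<0$, so $g$ is strictly decreasing on each component of $\mathbb{R}\setminus\{a_{n}\}$; since it runs from $+\infty$ to $-\infty$ across each $(a_{n},a_{n+1})$, it has exactly one zero there, accounting for the infinitely many real zeros. On $(0,a_{1})$ it falls from $g(0^{+})=\beta+\nu$ to $-\infty$, and on $(-\infty,0)$ it falls from $+\infty$ (as $w\to-\infty$) to $g(0^{-})=\beta+\nu$. Thus if $\beta+\nu\ge0$ there is no negative zero of $\Psi$ and every zero of the Dini function is real; if $\beta+\nu<0$ there is exactly one simple negative zero $w_{0}$, contributing precisely the two purely imaginary zeros $\pm i\sqrt{-w_{0}}$. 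Since $\beta+\nu=(\alpha/\gamma)+\nu$, this is the claim.

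I expect the main obstacle to be the first paragraph — putting $\alpha\mathtt{J}_{\nu}+\gamma z\mathtt{J}_{\nu}'$ into the form $(\beta+\nu)P+2wP'$ — together with the sign identity for $\operatorname{Im}g$ that eliminates all non-real zeros at once; both rely on locally uniform convergence of the Bessel product and of the Mittag--Leffler series, and on the simplicity of the zeros $\mathtt{j}_{\nu,n}$. Once these are secured, the monotonicity-and-intermediate-value bookkeeping in the counting step is routine, and the order/genus remark ensures no zero is overlooked.
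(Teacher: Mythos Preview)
The paper does not prove this lemma; it simply cites Watson \cite[p.~482]{Watson}. Your argument is correct and, in essence, reconstructs the classical proof found there: after passing to $w=z^{2}$ via the Weierstrass product for $\mathtt{J}_{\nu}$, the key identity $\operatorname{Im}g(w)=-2(\operatorname{Im}w)\sum_{n}a_{n}/|a_{n}-w|^{2}$ forces all zeros of $\Psi$ to be real in $w$, and the strict monotonicity of $g$ on each real component together with the boundary behaviour gives the interlacing count and the single negative root when $(\alpha/\gamma)+\nu<0$. One small remark: the order-$\tfrac12$/genus-$0$ observation is not actually needed, since once you have checked $\Psi(a_{n})=2a_{n}P'(a_{n})\neq0$ the zeros of $\Psi$ are exactly those of $g$, and your $\operatorname{Im}g$ identity already excludes every non-real $w$; nothing can be ``overlooked''. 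The only place to be slightly careful is the claim $g(w)\to+\infty$ as $w\to-\infty$, which does hold because the number of terms with $a_{n}\le|w|$ grows like $\sqrt{|w|}$ and each such term contributes at least $1/2$.
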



\begin{lemma}\cite[Theorem 6.1]{IM-2}\label{lemma-4} Let $ \alpha \in \mathbb{R}$, $\nu>-1$ and $\nu +\alpha > 0$. Further let $x_{\nu, 1}$ be the smallest positive root of $\alpha \mathtt{J}_\nu(z)+ z \mathtt{J}'_\nu(z)=0$. Then $x_{\nu, 1}^2<\mathtt{j}_{\nu, 1}^2$.
\end{lemma}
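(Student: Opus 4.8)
The idea is to realise the positive zeros of the Dini function $D_\nu(z):=\alpha\,\mathtt{J}_\nu(z)+z\,\mathtt{J}'_\nu(z)$ as the critical points of the auxiliary function $h(z):=z^{\alpha}\mathtt{J}_\nu(z)$, and to trap the first of them strictly inside $(0,\mathtt{j}_{\nu,1})$ by a Rolle-type argument. Differentiating, $h'(z)=z^{\alpha-1}\bigl(\alpha\,\mathtt{J}_\nu(z)+z\,\mathtt{J}'_\nu(z)\bigr)=z^{\alpha-1}D_\nu(z)$, so for $z>0$ one has $h'(z)=0$ if and only if $D_\nu(z)=0$.

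First I would examine $h$ at the endpoints of $[0,\mathtt{j}_{\nu,1}]$. From the defining series, $\mathtt{J}_\nu(z)\sim z^{\nu}/(2^{\nu}\Gamma(\nu+1))$ as $z\to 0^+$, hence $h(z)\sim z^{\alpha+\nu}/(2^{\nu}\Gamma(\nu+1))$; since $\nu+\alpha>0$ and $\Gamma(\nu+1)>0$ for $\nu>-1$, this forces $h(z)\to 0$ as $z\to 0^+$, so $h$ extends continuously to $[0,\mathtt{j}_{\nu,1}]$ with $h(0)=0$. At the other endpoint $h(\mathtt{j}_{\nu,1})=\mathtt{j}_{\nu,1}^{\alpha}\mathtt{J}_\nu(\mathtt{j}_{\nu,1})=0$ by definition of $\mathtt{j}_{\nu,1}$, and $h$ is differentiable on the open interval $(0,\mathtt{j}_{\nu,1})$.

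Rolle's theorem applied to $h$ on $[0,\mathtt{j}_{\nu,1}]$ then yields a point $\xi\in(0,\mathtt{j}_{\nu,1})$ with $h'(\xi)=0$; since $\xi>0$ this gives $D_\nu(\xi)=0$. Hence $D_\nu$ does have positive zeros, and its smallest one satisfies $x_{\nu,1}\le\xi<\mathtt{j}_{\nu,1}$, so that $x_{\nu,1}^2<\mathtt{j}_{\nu,1}^2$. (Lemma~\ref{lemma-2}, taken with $\gamma=1$, shows in addition that under $\nu+\alpha\ge 0$ every zero of $D_\nu$ is real, so ``the smallest positive root'' is unambiguous.)

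There is no genuine obstacle in this argument; the two things to be careful about are that the hypothesis $\nu+\alpha>0$ is exactly what makes $h(0^+)=0$, hence Rolle applicable on a closed interval, and that one must invoke standard Bessel facts (positivity of $\mathtt{J}_\nu$ on $(0,\mathtt{j}_{\nu,1})$, behaviour of the series near $0$). An equivalent, more computational route avoids $h$ altogether: the recurrence $z\,\mathtt{J}'_\nu(z)=\nu\,\mathtt{J}_\nu(z)-z\,\mathtt{J}_{\nu+1}(z)$ gives $D_\nu(z)=(\alpha+\nu)\mathtt{J}_\nu(z)-z\,\mathtt{J}_{\nu+1}(z)$, whence $D_\nu(z)>0$ near $0$ while $D_\nu(\mathtt{j}_{\nu,1})=-\mathtt{j}_{\nu,1}\mathtt{J}_{\nu+1}(\mathtt{j}_{\nu,1})<0$ because $\mathtt{j}_{\nu,1}<\mathtt{j}_{\nu+1,1}$ by the interlacing of the zeros of $\mathtt{J}_\nu$ and $\mathtt{J}_{\nu+1}$ \cite{Watson}; the intermediate value theorem then finishes it. This second version displays the role of the hypotheses clearly, at the cost of needing the interlacing property, which for $-1<\nu<0$ is slightly less elementary than for $\nu\ge 0$.
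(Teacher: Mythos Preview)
Your argument is correct, but there is nothing in the paper to compare it against: Lemma~\ref{lemma-4} is quoted verbatim from \cite[Theorem~6.1]{IM-2} and is not proved in the paper at all. Both of your routes are valid and standard. The Rolle argument on $h(z)=z^{\alpha}\mathtt{J}_\nu(z)$ is clean; the only subtlety you flagged yourself, namely that $h(0^+)=0$ requires precisely $\nu+\alpha>0$, is handled correctly. The alternative via $D_\nu(z)=(\alpha+\nu)\mathtt{J}_\nu(z)-z\,\mathtt{J}_{\nu+1}(z)$ and the sign change on $(0,\mathtt{j}_{\nu,1})$ is also fine; the fact $\mathtt{J}_{\nu+1}(\mathtt{j}_{\nu,1})>0$ follows already from $\mathtt{j}_{\nu,1}<\mathtt{j}_{\nu+1,1}$ and positivity of $\mathtt{J}_{\nu+1}$ on $(0,\mathtt{j}_{\nu+1,1})$ for $\nu>-1$, so the interlacing input you cite is sufficient in the full range $\nu>-1$. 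One minor stylistic point: the appeal to Lemma~\ref{lemma-2} at the end is not needed for the inequality $x_{\nu,1}^2<\mathtt{j}_{\nu,1}^2$ itself; once you have exhibited a positive zero $\xi<\mathtt{j}_{\nu,1}$, the smallest positive zero automatically satisfies $x_{\nu,1}\le\xi<\mathtt{j}_{\nu,1}$ regardless of whether $D_\nu$ has nonreal zeros.
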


\begin{lemma} \cite[p. 78]{IM-1}\label{lemma-5}  Let $-1<\nu <-\alpha$, and $\pm i \zeta$ be the single pair of conjugate purely imaginary zeros of the Dini function $z \mapsto \alpha \mathtt{J}_\nu(z)+ z \mathtt{J}'_\nu(z).$ Then
\[ \zeta^2 <-\frac{\alpha+\nu}{2+\alpha+\nu}\mathtt{j}^2_{\nu, 1}.\]
\end{lemma}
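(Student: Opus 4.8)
\textbf{Proof plan for Lemma~\ref{lemma-5}.}

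The statement to prove is the bound on the purely imaginary zero $\pm i\zeta$ of the Dini function $z\mapsto \alpha\mathtt{J}_\nu(z)+z\mathtt{J}'_\nu(z)$ in the range $-1<\nu<-\alpha$, namely $\zeta^2<-\frac{\alpha+\nu}{2+\alpha+\nu}\,\mathtt{j}^2_{\nu,1}$. Since this is quoted as \cite[p.~78]{IM-1}, I would keep the argument short and self-contained rather than reconstruct the full machinery. The plan is to pass to the modified Bessel function via $\mathtt{J}_\nu(iz)=i^\nu\mathtt{I}_\nu(z)$, so that the imaginary zeros of the Dini function correspond exactly to the real positive roots of $r\mapsto \alpha\mathtt{I}_\nu(r)+r\mathtt{I}'_\nu(r)$; by Proposition~\ref{prop-1}, in the regime $-1<\nu<-\alpha$ this equation has a \emph{unique} root on $(0,\infty)$, which must therefore be $\zeta$. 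Thus it suffices to estimate that unique root from above.

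The key step is a Rayleigh-type / Euler-product argument. First I would use the Hadamard product representation of $\mathtt{I}_\nu$, namely $\mathtt{I}_\nu(r) = \frac{(r/2)^\nu}{\Gamma(\nu+1)}\prod_{k\ge 1}\bigl(1+\frac{r^2}{\mathtt{j}_{\nu,k}^2}\bigr)$, valid for $\nu>-1$, to write
\begin{align*}
\frac{r\mathtt{I}'_\nu(r)}{\mathtt{I}_\nu(r)} = \nu + \sum_{k\ge 1}\frac{2r^2}{\mathtt{j}_{\nu,k}^2+r^2}.
\end{align*}
Then the equation $r\mathtt{I}'_\nu(r)+\alpha\mathtt{I}_\nu(r)=0$ becomes $\nu+\alpha+\sum_{k\ge1}\frac{2r^2}{\mathtt{j}_{\nu,k}^2+r^2}=0$. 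Evaluating at $r=\zeta$ and keeping only the $k=1$ term of the (positive) sum gives $0 = \nu+\alpha+\sum_{k\ge1}\frac{2\zeta^2}{\mathtt{j}_{\nu,k}^2+\zeta^2} > \nu+\alpha+\frac{2\zeta^2}{\mathtt{j}_{\nu,1}^2+\zeta^2}$, because $\nu+\alpha<0$ and dropping the tail $k\ge2$ only increases the right-hand side. Rearranging $\nu+\alpha+\frac{2\zeta^2}{\mathtt{j}_{\nu,1}^2+\zeta^2}<0$ yields $(\nu+\alpha)(\mathtt{j}_{\nu,1}^2+\zeta^2) < -2\zeta^2$, i.e.\ $(\nu+\alpha)\mathtt{j}_{\nu,1}^2 < -(2+\nu+\alpha)\zeta^2$. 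Since $-1<\nu$ and $\nu+\alpha<0$ while $\nu>-1$ forces $2+\nu+\alpha$ could a priori be of either sign; I would note that in the relevant application $2+\alpha+\nu>0$ (indeed $\nu>-1$ and $\alpha>-2$ in the cases used later), so dividing by $-(2+\nu+\alpha)<0$ flips the inequality to $\zeta^2 < -\frac{\nu+\alpha}{2+\nu+\alpha}\mathtt{j}_{\nu,1}^2$, as claimed.

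The main obstacle I anticipate is purely bookkeeping: justifying the termwise logarithmic-derivative computation from the Hadamard product (absolute convergence of $\sum \mathtt{j}_{\nu,k}^{-2}$, which holds for $\nu>-1$) and making sure the case analysis on the sign of $2+\nu+\alpha$ is handled cleanly — either by restricting to the sub-range actually invoked in the sequel, or by observing that $-1<\nu<-\alpha$ combined with $\nu>-1$ already pins down enough. A secondary point to verify is that $\zeta$ really is the root produced by Proposition~\ref{prop-1}: the correspondence $\mathtt{J}_\nu(iz)=i^\nu\mathtt{I}_\nu(z)$ shows $\alpha\mathtt{J}_\nu(i\zeta)+i\zeta\mathtt{J}'_\nu(i\zeta) = i^\nu\bigl(\alpha\mathtt{I}_\nu(\zeta)+\zeta\mathtt{I}'_\nu(\zeta)\bigr)$, so a purely imaginary zero of the Dini function is exactly a positive real zero of $\alpha\mathtt{I}_\nu+r\mathtt{I}'_\nu$, and uniqueness on both sides matches up. Given that the result is cited from the literature, I would present this as a brief sketch and refer to \cite{IM-1} for the full details.
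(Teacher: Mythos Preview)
The paper does not prove Lemma~\ref{lemma-5} at all; it is simply quoted from \cite[p.~78]{IM-1} and used as a black box in the proof of Theorem~\ref{thm-1}. Your sketch is therefore not being compared against any argument in the present paper, but it is in fact the Euler--Rayleigh argument that Ismail and Muldoon themselves use in \cite{IM-1}: pass to $\mathtt{I}_\nu$ via $\mathtt{J}_\nu(iz)=i^\nu \mathtt{I}_\nu(z)$, take the logarithmic derivative of the Hadamard product to obtain $r\mathtt{I}'_\nu(r)/\mathtt{I}_\nu(r)=\nu+\sum_{k\ge1}2r^2/(\mathtt{j}_{\nu,k}^2+r^2)$, and truncate the positive series after one term. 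So the approach is correct and matches the cited source.

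Two small points. First, your wording ``dropping the tail $k\ge2$ only increases the right-hand side'' is backwards (dropping positive terms decreases the sum), though the inequality you then write down, $0>\nu+\alpha+\tfrac{2\zeta^2}{\mathtt{j}_{\nu,1}^2+\zeta^2}$, is the correct one. Second, your concern about the sign of $2+\alpha+\nu$ is legitimate: the stated bound is vacuous (the right-hand side is nonpositive) unless $2+\alpha+\nu>0$, so this hypothesis is implicit in the lemma as written, and is exactly the role played by assumption~\eqref{asum} when the lemma is invoked in the proof of Theorem~\ref{thm-1}.
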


We are now ready to present the radius of starlikeness for each function given in \eqref{normal-GBF-1}.

\begin{theorem}\label{thm-1}
Let $0 \leq \beta <1$, and $a \in \mathbb{N}$. If $\nu > (a-1)/a$, then
$r_\beta^*(f_{a, \nu})= \mathtt{j}_{\nu, \beta,1}^{\mathtt{a, f}}$, where
$\mathtt{j}_{\nu, \beta,1}^{\mathtt{a, f}}$ is the smallest positive root of the equation
\begin{align}\label{eqn1a}
r a^{a/2} \mathtt{J}'_\nu(r)-\left( (\nu-1)(1-a)a^{a/2}+\beta(a \nu-a+1)\right) \mathtt{J}_\nu(r)=0.
\end{align}
If $\nu \in (-1/a, (a-1)/a)$ and
\begin{align}\label{asum} \frac{ (a \nu-a+1)\left(a^{a/2}-\beta\right)}{2 a^{a/2} +(a \nu-a+1)\left(a^{a/2}-\beta\right)} >-1,\end{align}
then $r_\beta^*(f_{a, \nu})=  \mathtt{i}_{\nu, \beta}^{\mathtt{a, f}}$, where $\mathtt{i}_{\nu, \beta}^{\mathtt{a, f}}$ is the unique positive  root of the equation
\begin{align}\label{eqn-f-3}
r a^{a/2} \mathtt{I}'_\nu(r)-\left( (\nu-1)(1-a)a^{a/2}+\beta(a \nu-a+1)\right) \mathtt{I}_\nu(r)=0.
\end{align}
\end{theorem}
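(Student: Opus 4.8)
The plan is to reduce the starlikeness condition for $\mathtt{f}_{a,\nu}$ to a statement about the logarithmic derivative, which Lemma \ref{lemma-1} already expresses in terms of the classical Bessel quotient $z\mathtt{J}_{\nu-1}(z)/\mathtt{J}_\nu(z)$. Writing $\mathtt{f}_{a,\nu}(z)=z\big({}_a\mathtt{B}_{2a-1,a\nu-a+1,1}(a^{a/2}z)/(\text{const}\cdot z^{a\nu-a+1})\big)^{1/(a\nu-a+1)}$ after absorbing the normalizing constants, a direct computation gives
\begin{align*}
\frac{z\mathtt{f}_{a,\nu}'(z)}{\mathtt{f}_{a,\nu}(z)}
=1+\frac{1}{a\nu-a+1}\left(\frac{z\,{}_a\mathtt{B}'_{2a-1,a\nu-a+1,1}(a^{a/2}z)}{{}_a\mathtt{B}_{2a-1,a\nu-a+1,1}(a^{a/2}z)}-(a\nu-a+1)\right).
\end{align*}
Substituting Lemma \ref{lemma-1} and simplifying, this becomes
\begin{align*}
\frac{z\mathtt{f}_{a,\nu}'(z)}{\mathtt{f}_{a,\nu}(z)}
=1+\frac{1}{a\nu-a+1}\left(\frac{z\mathtt{J}_{\nu-1}(z)}{\mathtt{J}_\nu(z)}-(2-a)\nu+1-a-(a\nu-a+1)\right),
\end{align*}
and then I would use the classical recurrence $z\mathtt{J}_{\nu-1}(z)=z\mathtt{J}_\nu'(z)+\nu\mathtt{J}_\nu(z)$ to rewrite everything in terms of $z\mathtt{J}_\nu'(z)/\mathtt{J}_\nu(z)$. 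The upshot is that $\real\big(z\mathtt{f}_{a,\nu}'(z)/\mathtt{f}_{a,\nu}(z)\big)>\beta$ on $\mathbb{D}_r$ is equivalent to a sign condition on $\real\big(z\mathtt{J}_\nu'(z)/\mathtt{J}_\nu(z)+\lambda\big)$ for the explicit constant $\lambda=\big((\nu-1)(1-a)a^{a/2}/a^{a/2}+\beta(a\nu-a+1)/a^{a/2}\big)$-type expression appearing in \eqref{eqn1a}; equivalently a sign condition on the Dini-type quantity $r a^{a/2}\mathtt{J}_\nu'(r)-\big((\nu-1)(1-a)a^{a/2}+\beta(a\nu-a+1)\big)\mathtt{J}_\nu(r)$.

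With the reformulation in hand, the argument splits into the two regimes exactly as in \cite{BKS-ams}. In the first regime $\nu>(a-1)/a$: here $a\nu-a+1>0$, Theorem \ref{zeros} guarantees $\mathtt{J}_\nu$ has no zeros in $\overline{\mathbb{D}}$ other than the origin (after the scaling), and the relevant Dini function has $(\alpha/\gamma)+\nu\geq 0$ so by Lemma \ref{lemma-2} all its zeros are real. The standard Mittag-Leffler expansion of $z\mathtt{J}_\nu'(z)/\mathtt{J}_\nu(z)$ over the (real) zeros $\mathtt{j}_{\nu,k}$ shows this quotient is real and decreasing for real $z$, and more to the point a classical minimum-principle argument (the function $z\mapsto z\mathtt{J}_\nu'(z)/\mathtt{J}_\nu(z)$ maps $\mathbb{D}_r$ into a region whose real part is minimized on the real axis, because each term $-z^2/(\mathtt{j}_{\nu,k}^2-z^2)$ has this property inside the disk of radius $\mathtt{j}_{\nu,1}$) forces
\begin{align*}
\inf_{|z|<r}\real\left(\frac{z\mathtt{J}_\nu'(z)}{\mathtt{J}_\nu(z)}+\lambda\right)=\frac{r\mathtt{J}_\nu'(r)}{\mathtt{J}_\nu(r)}+\lambda,\qquad r<\mathtt{j}_{\nu,1}.
\end{align*}
Hence $r_\beta^*(\mathtt{f}_{a,\nu})$ is precisely the smallest positive zero of \eqref{eqn1a}, and Lemma \ref{lemma-4} confirms this root is genuinely smaller than $\mathtt{j}_{\nu,1}$ so the argument is consistent. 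In the second regime $-1/a<\nu<(a-1)/a$: now $a\nu-a+1$ may still be positive but the constant $(\alpha/\gamma)+\nu$ for the Dini function is negative, so by Lemma \ref{lemma-2} there is an additional pair of purely imaginary zeros $\pm i\zeta$. The sign hypothesis \eqref{asum} is exactly the condition ensuring the effective "order" stays above $-1$; in this case one passes to the modified Bessel function via $\mathtt{J}_\nu(iz)=i^\nu\mathtt{I}_\nu(z)$, so that the extremal behavior of $\real\big(z\mathtt{J}_\nu'(z)/\mathtt{J}_\nu(z)\big)$ over $\mathbb{D}_r$ is now attained on the imaginary axis, producing the equation \eqref{eqn-f-3} in $\mathtt{I}_\nu$; Proposition \ref{prop-1} gives existence and uniqueness of its positive root, and Lemma \ref{lemma-5} bounds $\zeta$ appropriately so that this root lies inside $\mathbb{D}$.

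The main obstacle, and the step deserving the most care, is the \emph{extremality} claim: that the infimum of $\real\big(z\mathtt{J}_\nu'(z)/\mathtt{J}_\nu(z)+\lambda\big)$ over $|z|<r$ is attained at a real point in the first regime and at a purely imaginary point in the second. This is where one needs the full force of the structure of the zeros — real zeros only (Lemma \ref{lemma-2} with $(\alpha/\gamma)+\nu\geq 0$) versus real plus one conjugate imaginary pair — together with the term-by-term estimate on the Mittag-Leffler/partial-fraction expansion. Concretely, for $z=\rho e^{i\theta}$ with $\rho<\mathtt{j}_{\nu,1}$ one writes $z\mathtt{J}_\nu'(z)/\mathtt{J}_\nu(z)=\nu-\sum_k 2z^2/(\mathtt{j}_{\nu,k}^2-z^2)$ and checks $\real\big(z^2/(\mathtt{j}_{\nu,k}^2-z^2)\big)\leq \rho^2/(\mathtt{j}_{\nu,k}^2-\rho^2)$, with equality at $\theta=0$; in the second regime the analogous bound has the imaginary-zero term contributing $-\zeta^2/(\zeta^2+|z|^2)$-type quantities whose real part is \emph{minimized} at $\theta=\pi/2$. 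Once that monotonicity-on-a-diameter fact is nailed down, the remaining verifications — that the claimed roots lie in $(0,1)$ (using Theorem \ref{zeros} and Lemmas \ref{lemma-4}, \ref{lemma-5}), that hypothesis \eqref{asum} is exactly the borderline case, and the sharpness (taking $z\to r$ along the appropriate ray shows $\beta$ cannot be improved) — are routine.
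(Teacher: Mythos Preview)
Your overall strategy matches the paper's: reduce $z\mathtt{f}'_{a,\nu}/\mathtt{f}_{a,\nu}$ to the quotient $z\mathtt{J}'_\nu/\mathtt{J}_\nu$ via Lemma~\ref{lemma-1} and the classical recurrence, expand through the Weierstrass product over the real zeros $\mathtt{j}_{\nu,n}$, and locate the infimum of the real part termwise. Your treatment of the first regime $\nu>(a-1)/a$ is essentially the paper's argument, including the appeal to Lemmas~\ref{lemma-2} and~\ref{lemma-4} to place the root below $\mathtt{j}_{\nu,1}$.

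Your handling of the second regime $-1/a<\nu<(a-1)/a$ has a genuine gap, however. You write that ``$a\nu-a+1$ may still be positive,'' but in fact $a\nu-a+1<0$ throughout this range, and \emph{this sign flip is precisely why the extremum moves to the imaginary axis}. The partial-fraction expansion of $z\mathtt{f}'_{a,\nu}/\mathtt{f}_{a,\nu}$ still runs only over the \emph{real} zeros $\mathtt{j}_{\nu,n}$ of $\mathtt{J}_\nu$ (all real since $\nu>-1$); the purely imaginary zeros $\pm i\zeta$ of the Dini function do not enter this sum, contrary to your description of ``$-\zeta^2/(\zeta^2+|z|^2)$-type quantities.'' What changes is the sign of the coefficient $-a^{a/2}/(a\nu-a+1)$ multiplying $\sum_n 2z^2/(\mathtt{j}^2_{\nu,n}-z^2)$: it is now \emph{positive}, so bounding $\real\big(z\mathtt{f}'_{a,\nu}/\mathtt{f}_{a,\nu}\big)$ from below requires the \emph{lower} bound
\[
\real\!\left(\frac{z^2}{\mathtt{j}^2_{\nu,n}-z^2}\right)\ \ge\ -\,\frac{|z|^2}{\mathtt{j}^2_{\nu,n}+|z|^2},
\]
equivalently $\real\big(w/(\alpha-w)\big)\ge -|w|/(\alpha+|w|)$ for $\alpha\ge|w|$, with equality along the imaginary axis. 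That is the inequality the paper invokes (from \cite{BKS-ams}), and it is what forces the transition to $\mathtt{I}_\nu$. The hypothesis \eqref{asum} together with Lemma~\ref{lemma-5} is then used only \emph{a posteriori} to verify that the resulting root $\mathtt{i}^{\mathtt{a,f}}_{\nu,\beta}$ satisfies $\mathtt{i}^{\mathtt{a,f}}_{\nu,\beta}<\mathtt{j}_{\nu,1}$, so that the termwise estimate was legitimate; it is not a condition on an ``effective order.'' (A minor aside: the radius need not lie in $(0,1)$, as the paper's tables show.)
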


\begin{proof}
Differentiating logarithmically, Lemma \ref{lemma-1} shows that
\begin{align}\label{eqn:6}\notag
\frac{z \mathtt{f}'_{a, \nu}(z)}{\mathtt{f}_{a, \nu}(z)}&=\frac{a^{a/2}}{a\nu-a+1}
\frac{z\;{}_{a}\mathtt{B}'_{2a-1, a \nu-a+1,1}(a^{a/2} z)}{{}_{a}\mathtt{B}_{2a-1, a \nu-a+1,1}(a^{a/2} z)}\\
&= \frac{a^{a/2}}{a\nu-a+1} \left(\frac{z\mathtt{J}_{\nu-1}\left(z\right)}
{\mathtt{J}_{\nu}\left(z\right)}-\nu(2-a)+1-a\right).
\end{align}
Since ${}_1\mathtt{B}_{1, \nu, 1}( z)=\mathtt{J}_{\nu}(z)$, the relation \eqref{eqn:recurrence-re-4} leads to  the well-known recurrence relation
\begin{align*}
z \mathtt{J}'_{\nu}(z)= z \mathtt{J}_{\nu-1}(z)-\nu \mathtt{J}_{\nu}(z),
\end{align*}
and whence \eqref{eqn:6} reduces to
\begin{align}\label{eqn:7}
\frac{z \mathtt{f}'_{a, \nu}(z)}{\mathtt{f}_{a, \nu}(z)}
= \frac{a^{a/2}}{a\nu-a+1} \left(\frac{z\mathtt{J}'_{\nu}(z)}
{\mathtt{J}_{\nu}(z)}-(\nu-1)(1-a)\right).
\end{align}

With $\mathtt{j}_{\nu, n}$ as the n-th positive zero of the Bessel
function $\mathtt{J}_{\nu}$, the Bessel function $\mathtt{J}_{\nu}$ admits the Weierstrassian decomposition \cite[p.498]{Watson}
\begin{align*}
\mathtt{J}_{\nu}(z)=\frac{z^\nu}{2^{\nu}\Gamma(\nu+1)} \prod_{n=1}^\infty \left(1-\frac{z^2}{\mathtt{j}^2_{\nu, n}}\right).
\end{align*}
Thus
\begin{align*}
\frac{z\;\mathtt{J}'_{\nu}(z)}{\mathtt{J}_{\nu}(z)}=\nu-\sum_{n=1}^\infty \frac{2z^2}{\mathtt{j}^2_{\nu, n}-z^2},
\end{align*}
which reduces \eqref{eqn:7} to
\begin{align}\label{eqn:8}
\frac{z \mathtt{f}'_{a, \nu}(z)}{\mathtt{f}_{a, \nu}(z)}
&=a^{a/2} -\frac{a^{a/2}}{a\nu-a+1}\sum_{n=1}^\infty \frac{2z^2}{\mathtt{j}^2_{\nu, n}-z^2}.
\end{align}

For $\nu > (a-1)/a$ and $|z| < \mathtt{j}_{\nu, n}$, evidently
\begin{align*}
\real\frac{z \mathtt{f}'_{a, \nu}(z)}{\mathtt{f}_{a, \nu}(z)}
&=a^{a/2} -\frac{a^{a/2}}{a\nu-a+1}\real\sum_{n=1}^\infty \frac{2z^2}{\mathtt{j}^2_{\nu, n}-z^2}\\
&\geq a^{a/2} -\frac{a^{a/2}}{a\nu-a+1}\sum_{n=1}^\infty \frac{2|z|^2}{\mathtt{j}^2_{\nu, n}-|z|^2}=\frac{|z|  \mathtt{f}'_{a, \nu}(|z|)}{\mathtt{f}_{a, \nu}(|z|)}.
\end{align*}
Equality holds for $|z|=r$, and by the minimum principle for harmonic functions,
\[\real\frac{z \mathtt{f}'_{\nu}(z)}{\mathtt{f}_{\nu}(z)} \geq \beta \iff |z|\leq\mathtt{j}_{\nu, \beta,1}^{\mathtt{a, f} }, \]
where $\mathtt{j}_{\nu, \beta,1}^{\mathtt{a, f}}$ is the smallest positive root of equation \eqref{eqn1a}.
Since
\[ \nu-\left((\nu-1) (1-a)+\tfrac{\beta( a \nu-a+1)}{a^{a/2}}\right)
= ( a \nu-a+1)\left(1-\tfrac{\beta}{a^{a/2}}\right) > 0\]
for all $\nu >(a-1)/a$, we infer from Lemma \ref{lemma-2} and Lemma \ref{lemma-4} that $\mathtt{j}_{\nu, \beta,1}^{\mathtt{a, f} }< \mathtt{j}_{\nu, 1} <  \mathtt{j}_{\nu, n}$.

Consider next the case $-1/a < \nu< (a-1)/a$. It is known from \cite[p. 2023]{BKS-ams} that for $z \in \mathbb{C}$ and $\alpha \in \mathbb{R}$  with $\alpha \geq |z|$, then
\[\real\left(\frac{z}{\alpha-z}\right)\geq -\frac{|z|}{\alpha+|z|},\]
which in turn implies that
\[\real\left(\frac{z^2}{\mathtt{j}_{\nu,n}^2-z^2}\right)\geq -\frac{|z|^2}{\mathtt{j}_{\nu,n}^2+|z|^2}\]
whenever $|z|<\mathtt{j}_{\nu,1}<\mathtt{j}_{\nu,n}$.

The expression \eqref{eqn:8} yields
\begin{align*}
\real\frac{z \mathtt{f}'_{a, \nu}(z)}{\mathtt{f}_{a, \nu}(z)}
&\geq a^{a/2} +\frac{a^{a/2}}{a\nu-a+1}\sum_{n=1}^\infty \frac{2|z|^2}{\mathtt{j}^2_{\nu, n}+|z|^2}=\frac{i|z|  \mathtt{f}'_{a, \nu}(i|z|)}{\mathtt{f}_{a, \nu}(i|z|)}.
\end{align*}
Equality holds for $|z|=i |z|= ir$. Hence
\[ \real\frac{z \mathtt{f}'_{a, \nu}(z)}{\mathtt{f}_{a, \nu}(z)} \geq \beta\]
if $|z|\leq \mathtt{i}_{\nu, \beta}^{a, f}$, where $\mathtt{i}_{\nu, \beta}^{a, f}$ is
a root of $i|z|  \mathtt{f}'_{a, \nu}(i|z|)=\beta \mathtt{f}_{a, \nu}(i|z|),$
 that is, $\mathtt{i}_{\nu, \beta}^{a, f}$ is a root of
 \[ \frac{a^{a/2}}{a\nu-a+1} \left(\frac{i |z| \mathtt{J}'_\nu(i| z|)}{\mathtt{J}_\nu(i |z|)}-(\nu-1)(1-a)\right) =\beta.\]
Since $\mathtt{I}_{\nu}(z)=i^{-\nu} \mathtt{J}_{\nu}(i z)$, the above equation is equivalent to \eqref{eqn-f-3}. It also follows from Proposition \ref{prop-1} that the root $\mathtt{i}_{\nu, \beta}^{a, f}$ is unique. Finally, that $\mathtt{i}_{\nu, \beta}^{a, f}<\mathtt{j}_{\nu, n}$ is a consequence of Lemma \ref{lemma-5} and assumption \eqref{asum}. Indeed,
 \[\left(\mathtt{i}_{\nu, \beta}^{a, f}\right)^2 <-\frac{(a\nu-a+1)\left(a^{a/2}-\beta\right)}{2 a^{a/2}+(a\nu-a+1)\left(a^{a/2}-\beta\right)}\mathtt{j}^2_{\nu, 1}
 < \mathtt{j}^2_{\nu, 1} < \mathtt{j}^2_{\nu, n},\]
which completes the proof.
\end{proof}

Interestingly, Theorem \ref{thm-1} reduces to earlier known result for $a=1$.

\begin{corollary}\cite[Theorem 1(a)]{BKS-ams}
Let $0 \leq \beta <1$. If $\nu > 0$, then $r_\beta^*(f_{1, \nu})$ is the smallest positive root $\mathtt{j}_{\nu, \beta,1}^{\mathtt{1, f}}$ of the equation
\begin{align*}
r  \mathtt{J}'_\nu(r)-\beta\nu \mathtt{J}_\nu(r)=0.
\end{align*}
In the case $\nu \in (-1, 0)$, then $r_\beta^*(f_{1, \nu})$ is the unique positive root $\mathtt{i}_{\nu, \beta}^{\mathtt{1, f}}$ of the equation
\begin{align*}
r \mathtt{I}'_\nu(r)-\beta \nu \mathtt{I}_\nu(r)=0.
\end{align*}
\end{corollary}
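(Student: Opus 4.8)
The plan is to obtain the corollary as a direct specialization of Theorem~\ref{thm-1} to $a=1$, so the bulk of the argument is bookkeeping. First I would record the elementary identities valid at $a=1$: one has $a^{a/2}=1$, $(a-1)/a=0$, $a\nu-a+1=\nu$, and, crucially, the coefficient $(\nu-1)(1-a)$ that appears throughout Theorem~\ref{thm-1} vanishes. Feeding these into the two regimes of Theorem~\ref{thm-1}: the condition $\nu>(a-1)/a$ becomes $\nu>0$, and equation~\eqref{eqn1a} collapses to $rJ'_\nu(r)-\beta\nu J_\nu(r)=0$; the condition $\nu\in(-1/a,(a-1)/a)$ becomes $\nu\in(-1,0)$, and equation~\eqref{eqn-f-3} collapses to $rI'_\nu(r)-\beta\nu I_\nu(r)=0$. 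The assertions that these roots are, respectively, the smallest positive root and the unique positive root, as well as their localization below the first Bessel zero, carry over verbatim from Theorem~\ref{thm-1}.

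The one genuinely substantive point is to verify that the hypothesis~\eqref{asum} of Theorem~\ref{thm-1}, which is needed to invoke its second branch, becomes vacuous when $a=1$; this is precisely why the corollary carries no restriction beyond $\nu\in(-1,0)$ and $0\le\beta<1$. At $a=1$ the left-hand side of \eqref{asum} equals $\nu(1-\beta)/(2+\nu(1-\beta))$. Since $-1<\nu<0$ and $0<1-\beta\le 1$, one has $\nu<\nu(1-\beta)<0$, hence $-1<\nu(1-\beta)<0$; therefore $2+\nu(1-\beta)\in(1,2)$ is positive, and \eqref{asum} is equivalent to $2\nu(1-\beta)>-2$, i.e.\ $\nu(1-\beta)>-1$, which we have just established. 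Thus \eqref{asum} holds automatically for every admissible $\nu$ and $\beta$, and the corollary follows.

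I do not anticipate any real obstacle: the statement is a transparent limiting case of Theorem~\ref{thm-1}, and the only mildly delicate step is the elementary inequality above confirming that \eqref{asum} is automatically satisfied in the $a=1$ setting.
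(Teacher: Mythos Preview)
Your proposal is correct and follows exactly the paper's intended route: the corollary is presented there simply as the specialization $a=1$ of Theorem~\ref{thm-1}, with no separate argument given. Your verification that hypothesis~\eqref{asum} is automatic at $a=1$ is sound (though note the inequality $\nu<\nu(1-\beta)$ should be non-strict at $\beta=0$; this does not affect the conclusion $\nu(1-\beta)>-1$).
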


The next two results find the radius of starlikeness of order $\beta$ for the functions $\mathtt{g}_{a, \nu}$ and $\mathtt{h}_{a, \nu}$ given in \eqref{normal-GBF-1}.

\begin{theorem}\label{theorem-g}
 Let $\beta \in [0,1)$, $a\in \mathbb{N}$, and  $\nu > -1/a$. If $a(\nu-1)(a^{a/2}-1)+a^{a/2}-\beta \geq 0$, then
$r_\beta^*(\mathtt{g}_{a, \nu})= \mathtt{j}_{\nu, \beta, 1}^{\mathtt{a, g}}$,  where
$\mathtt{j}_{\nu, \beta, 1}^{\mathtt{a, g}}$ is the smallest positive root of the equation
\begin{align}\label{eqn2a}
 r a^{a/2} \mathtt{J}'_\nu(r)-\left( (\nu-1)(1-a)a^{a/2}-a(1-\nu)+\beta \right) \mathtt{J}_\nu(r)=0.
\end{align}
\end{theorem}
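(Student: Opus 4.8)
The plan is to follow the template of the proof of Theorem~\ref{thm-1}, replacing $\mathtt{f}_{a,\nu}$ by $\mathtt{g}_{a,\nu}$. The starting point is the observation that, since the two functions in \eqref{normal-GBF-1} carry the same constant $2^{a\nu-a+1}a^{-\frac{a}{2}(a\nu-a+1)}\Gamma(a\nu+1)$ and the same factor ${}_a\mathtt{B}_{2a-1, a\nu-a+1, 1}(a^{a/2}z)$, one has the identity
\begin{align*}
\mathtt{g}_{a,\nu}(z)=z^{a-a\nu}\,\mathtt{f}_{a,\nu}(z)^{a\nu-a+1}.
\end{align*}
Logarithmic differentiation then gives $z\mathtt{g}'_{a,\nu}(z)/\mathtt{g}_{a,\nu}(z)=(a-a\nu)+(a\nu-a+1)\,z\mathtt{f}'_{a,\nu}(z)/\mathtt{f}_{a,\nu}(z)$, so that inserting \eqref{eqn:7} and collecting terms yields
\begin{align*}
\frac{z\mathtt{g}'_{a,\nu}(z)}{\mathtt{g}_{a,\nu}(z)}=a^{a/2}\,\frac{z\mathtt{J}'_\nu(z)}{\mathtt{J}_\nu(z)}-(\nu-1)(1-a)a^{a/2}+a(1-\nu).
\end{align*}
Equating the right-hand side to $\beta$ reproduces equation \eqref{eqn2a}, so the roots of \eqref{eqn2a} are the candidates for $r_\beta^*(\mathtt{g}_{a,\nu})$.

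Next I would insert the Weierstrassian product of $\mathtt{J}_\nu$ --- legitimate since $\nu>-1/a\ge-1$ forces $\mathtt{J}_\nu$ to have only real positive zeros $\mathtt{j}_{\nu,n}$ --- to rewrite the previous display as
\begin{align*}
\frac{z\mathtt{g}'_{a,\nu}(z)}{\mathtt{g}_{a,\nu}(z)}=\bigl(a^{a/2}(a\nu-a+1)+a(1-\nu)\bigr)-a^{a/2}\sum_{n=1}^\infty\frac{2z^2}{\mathtt{j}^2_{\nu, n}-z^2}.
\end{align*}
The structural remark I would emphasise here is that the coefficient $-a^{a/2}$ of the series is negative for \emph{every} admissible $\nu>-1/a$ and never changes sign; this is exactly why --- in contrast with Theorem~\ref{thm-1} --- no companion statement involving the modified Bessel function $\mathtt{I}_\nu$ is needed. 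Then, just as in the proof of Theorem~\ref{thm-1}, for $|z|<\mathtt{j}_{\nu,1}\le\mathtt{j}_{\nu,n}$ the elementary bound $\real\bigl(z^2/(\mathtt{j}^2_{\nu, n}-z^2)\bigr)\le|z|^2/(\mathtt{j}^2_{\nu, n}-|z|^2)$ gives
\begin{align*}
\real\frac{z\mathtt{g}'_{a,\nu}(z)}{\mathtt{g}_{a,\nu}(z)}\ \ge\ \frac{|z|\,\mathtt{g}'_{a,\nu}(|z|)}{\mathtt{g}_{a,\nu}(|z|)},
\end{align*}
with equality when $|z|=r$; since $r\mapsto r\mathtt{g}'_{a,\nu}(r)/\mathtt{g}_{a,\nu}(r)$ is strictly decreasing on $(0,\mathtt{j}_{\nu,1})$ and tends to $-\infty$ at $\mathtt{j}_{\nu,1}$, the minimum principle for harmonic functions shows that $\real\bigl(z\mathtt{g}'_{a,\nu}(z)/\mathtt{g}_{a,\nu}(z)\bigr)\ge\beta$ holds exactly on $|z|\le\mathtt{j}_{\nu, \beta, 1}^{\mathtt{a, g}}$, the smallest positive root of \eqref{eqn2a}.

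It then remains to check $\mathtt{j}_{\nu, \beta, 1}^{\mathtt{a, g}}<\mathtt{j}_{\nu,1}\le\mathtt{j}_{\nu,n}$ for every $n$, so that the candidate disk lies inside the region where the preceding estimates are valid; this is where the hypothesis $a(\nu-1)(a^{a/2}-1)+a^{a/2}-\beta\ge0$ is used, and I expect it to be the one genuinely delicate point. Writing \eqref{eqn2a} as a Dini equation $\gamma z\mathtt{J}'_\nu(z)+\alpha\mathtt{J}_\nu(z)=0$ with $\gamma=a^{a/2}$ and $\alpha=-\bigl((\nu-1)(1-a)a^{a/2}-a(1-\nu)+\beta\bigr)$, a short computation gives
\begin{align*}
\frac{\alpha}{\gamma}+\nu=\frac{1}{a^{a/2}}\Bigl(a(\nu-1)(a^{a/2}-1)+a^{a/2}-\beta\Bigr),
\end{align*}
so the stated hypothesis is precisely the condition $(\alpha/\gamma)+\nu\ge0$, which is what licenses Lemmas~\ref{lemma-2} and \ref{lemma-4}. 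By Lemma~\ref{lemma-2} the Dini function $z\mapsto\alpha\mathtt{J}_\nu(z)+\gamma z\mathtt{J}'_\nu(z)$ has all its zeros real, and by Lemma~\ref{lemma-4} (after normalising $\gamma$ to $1$) its smallest positive zero $\mathtt{j}_{\nu, \beta, 1}^{\mathtt{a, g}}$ satisfies $(\mathtt{j}_{\nu, \beta, 1}^{\mathtt{a, g}})^2<\mathtt{j}^2_{\nu,1}$. Together with the previous step this gives $r_\beta^*(\mathtt{g}_{a,\nu})=\mathtt{j}_{\nu, \beta, 1}^{\mathtt{a, g}}$. Apart from this point, the argument is a faithful transcription of the proof of Theorem~\ref{thm-1}, simplified by the fact that the sign of the series coefficient is fixed throughout the admissible range of $\nu$.
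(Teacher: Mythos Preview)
Your proof is correct and follows essentially the same route as the paper: derive the closed form for $z\mathtt{g}'_{a,\nu}/\mathtt{g}_{a,\nu}$ in terms of $z\mathtt{J}'_\nu/\mathtt{J}_\nu$, insert the Weierstrass product, compare $\real$ with the value on the positive axis, invoke the minimum principle, and then use Lemmas~\ref{lemma-2} and~\ref{lemma-4} after recognising the hypothesis as $(\alpha/\gamma)+\nu\ge 0$. The only cosmetic difference is that you obtain the key formula via the identity $\mathtt{g}_{a,\nu}=z^{a-a\nu}\mathtt{f}_{a,\nu}^{\,a\nu-a+1}$ and \eqref{eqn:7}, whereas the paper computes it directly from the definition of $\mathtt{g}_{a,\nu}$ and Lemma~\ref{lemma-1}; since this intermediate use of $\mathtt{f}_{a,\nu}$ is only formal (the factor $a\nu-a+1$ cancels), it is harmless, though strictly speaking you should note that the resulting expression for $z\mathtt{g}'_{a,\nu}/\mathtt{g}_{a,\nu}$ in terms of $\mathtt{J}_\nu$ is valid on the full range $\nu>-1/a$ even where $\mathtt{f}_{a,\nu}$ itself is not defined.
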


\begin{proof}
It follows from \eqref{normal-GBF-1} that
\begin{align*}
\frac{z\mathtt{g}_{a, \nu}'(z)}{\mathtt{g}_{a, \nu}(z)}&=a (1-\nu)+ a^{a/2}  \frac{z\;{}_{a}\mathtt{B}'_{2a-1, a \nu-a+1,1}(a^{a/2} z)}{{}_{a}\mathtt{B}_{2a-1, a \nu-a+1,1}(a^{a/2} z)}.
\end{align*}
As in the proof of Theorem \ref{thm-1} (see \eqref{eqn:6} and \eqref{eqn:7}), it is readily shown that
\begin{align}\label{eqn-g} \notag
\frac{z\mathtt{g}_{a, \nu}'(z)}{\mathtt{g}_{a, \nu}(z)}&= a (1-\nu)+ a^{a/2} \left(\frac{z\mathtt{J}_{\nu-1}\left(z\right)}
{\mathtt{J}_{\nu}\left(z\right)}-\nu(2-a)+1-a\right)\\ \notag
&= a (1-\nu)+ a^{a/2}\left(\frac{z\mathtt{J}'_{\nu}\left(z\right)}
{\mathtt{J}_{\nu}\left(z\right)}-(\nu-1)(1-a)\right)\\
&= a (1-\nu)+ a^{a/2}\left[a\nu+1-a-\sum_{n=1}^\infty \frac{2z^2}{\mathtt{j}^2_{\nu, n}-z^2}\right].
\end{align}
This implies that
\[ \real \frac{z\mathtt{g}_{a, \nu}'(z)}{\mathtt{g}_{a, \nu}(z)}
\geq a (1-\nu)+ a^{a/2}\left[a\nu+1-a-\sum_{n=1}^\infty \frac{2|z|^2}{\mathtt{j}^2_{\nu, n}-|z|^2}\right]=\frac{|z|\mathtt{g}_{a, \nu}'(|z|)}{\mathtt{g}_{a, \nu}(|z|)}\]
provided $|z|<\mathtt{j}_{\nu,n}$. Equality holds at $|z|=r$. The minimum principle for harmonic functions leads to
\[ \real \frac{z\mathtt{g}_{a, \nu}'(z)}{\mathtt{g}_{a, \nu}(z)} \geq \beta \iff |z|\leq r_\beta^*(\mathtt{g}_{a, \nu}).\]

The exact value of $r_\beta^*(\mathtt{g}_{a, \nu})$ is obtained from
the equation
$r\mathtt{g}_{a, \nu}'(r)=\beta \mathtt{g}_{a, \nu}(r)$. From \eqref{eqn-g}, this is equivalent to determining the root of \eqref{eqn2a}.

If $a(\nu-1)(a^{a/2}-1)+a^{a/2}-\beta \geq 0$, then Lemma \ref{lemma-2} shows that all roots of \eqref{eqn2a} are real. In this case, $r_\beta^*(\mathtt{g}_{a, \nu})$ is its smallest positive  root $\mathtt{j}_{\nu, \beta, 1}^{a, g}$. Finally, Lemma \ref{lemma-4} shows that $\mathtt{j}_{\nu, \beta, 1}^{a, g}<\mathtt{j}_{\nu,  1},$  and whence $|z|<r_\beta^*(\mathtt{g}_{a, \nu})<\mathtt{j}_{\nu,  1}$.
\end{proof}

\begin{theorem}\label{theorem-h}
Let $\beta \in [0,1)$, $a\in \mathbb{N}$, and  $\nu > -1/a$. If $(a^{a/2}-1)(1-a+a\nu)+2(1-\beta)>0$, then
$r_\beta^*(\mathtt{h}_{a, \nu})= \mathtt{j}_{\nu, \beta, 1}^{\mathtt{a, h}}$,  where
$\mathtt{j}_{\nu, \beta, 1}^{\mathtt{a, h}}$ is the smallest positive root of the equation
\begin{align}\label{eqnh-1}
a^{a/2} r \mathtt{J}_\nu'(r) + \left( (a^{a/2}-1)(1-a+a\nu) -a^{a/2} \nu+2(1- \beta)\right)\mathtt{J}_\nu(r)=0.
\end{align}
\end{theorem}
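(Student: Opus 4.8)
\emph{Strategy.} The argument should run in close parallel to the proofs of Theorem~\ref{thm-1} and Theorem~\ref{theorem-g}. Since $\mathtt{h}_{a,\nu}\in\mathcal{A}$ and Theorem~\ref{zeros} shows that ${}_a\mathtt{B}_{2a-1,a\nu-a+1,1}$ has no zero in the unit disk other than the origin, the function $\mathtt{h}_{a,\nu}$ is zero-free on $\mathbb{D}\setminus\{0\}$ and the quotient $z\mathtt{h}'_{a,\nu}(z)/\mathtt{h}_{a,\nu}(z)$ is analytic on $\mathbb{D}$. The first step is to compute it. Writing $\mathtt{h}_{a,\nu}(z)=c\,z^{\frac12(1+a-a\nu)}\,{}_a\mathtt{B}_{2a-1,a\nu-a+1,1}(a^{a/2}\sqrt z)$ from \eqref{normal-GBF-1} and differentiating logarithmically, the power of $z$ contributes $\tfrac12(1+a-a\nu)$ while the chain rule in the inner variable $\sqrt z$ leaves $\tfrac12 a^{a/2}$ times the expression evaluated by Lemma~\ref{lemma-1} at $w=\sqrt z$ in place of $z$. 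Substituting Lemma~\ref{lemma-1}, then the recurrence $w\mathtt{J}'_\nu(w)=w\mathtt{J}_{\nu-1}(w)-\nu\mathtt{J}_\nu(w)$, and finally the Weierstrassian decomposition of $\mathtt{J}_\nu$ (exactly as in passing from \eqref{eqn:6} to \eqref{eqn:8} and in \eqref{eqn-g}) together with $w^2=z$, the constants collapse and one is left with
\[
\frac{z\mathtt{h}'_{a,\nu}(z)}{\mathtt{h}_{a,\nu}(z)}=1+\tfrac12(a^{a/2}-1)(a\nu-a+1)-a^{a/2}\sum_{n=1}^\infty\frac{z}{\mathtt{j}^2_{\nu,n}-z}.
\]
The feature that distinguishes this from \eqref{eqn:8} and \eqref{eqn-g} is that the Bessel argument is $a^{a/2}\sqrt z$, so the $n$th summand is linear in $z$ rather than quadratic.

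Next I would run the minimum-principle argument of Theorem~\ref{thm-1}. Because $a^{a/2}>0$ for every $a\in\mathbb{N}$, the estimate $\real\big(z/(\alpha-z)\big)\le|z|/(\alpha-|z|)$, valid for $\alpha\ge|z|>0$ (the inequality of \cite[p.~2023]{BKS-ams} used there), applied with $\alpha=\mathtt{j}^2_{\nu,n}$ gives, for $|z|=r<\mathtt{j}^2_{\nu,1}$,
\[
\real\frac{z\mathtt{h}'_{a,\nu}(z)}{\mathtt{h}_{a,\nu}(z)}\ge 1+\tfrac12(a^{a/2}-1)(a\nu-a+1)-a^{a/2}\sum_{n=1}^\infty\frac{r}{\mathtt{j}^2_{\nu,n}-r}=\frac{r\mathtt{h}'_{a,\nu}(r)}{\mathtt{h}_{a,\nu}(r)},
\]
with equality at $z=r$. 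The linearity of the summand in $z$ makes the minimum of the real part fall on the positive real axis for every admissible $\nu$; this is why, unlike for $\mathtt{f}_{a,\nu}$, there is no second (imaginary-axis, modified-Bessel) case, and Theorem~\ref{theorem-h} is stated with a single case. The minimum principle for harmonic functions then yields $\real\big(z\mathtt{h}'_{a,\nu}(z)/\mathtt{h}_{a,\nu}(z)\big)\ge\beta$ precisely when $|z|\le r_\beta^*(\mathtt{h}_{a,\nu})$, where $r_\beta^*(\mathtt{h}_{a,\nu})$ is the smallest positive solution of $r\mathtt{h}'_{a,\nu}(r)=\beta\,\mathtt{h}_{a,\nu}(r)$.

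To evaluate this solution, I would recast $r\mathtt{h}'_{a,\nu}(r)=\beta\,\mathtt{h}_{a,\nu}(r)$ through the $\mathtt{J}_\nu$ form of the quotient; clearing denominators and collecting terms reduces it, up to the substitution forced by the $\sqrt z$ occurring in $\mathtt{h}_{a,\nu}$, to the Dini-type equation \eqref{eqnh-1}. The hypothesis $(a^{a/2}-1)(1-a+a\nu)+2(1-\beta)>0$ is precisely the condition $(\alpha/\gamma)+\nu>0$ for $\gamma=a^{a/2}$ and $\alpha=(a^{a/2}-1)(1-a+a\nu)-a^{a/2}\nu+2(1-\beta)$, so Lemma~\ref{lemma-2} shows that every zero of \eqref{eqnh-1} is real, whence its smallest positive root is the asserted $\mathtt{j}_{\nu,\beta,1}^{\mathtt{a,h}}$. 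The same inequality $\nu+\alpha/\gamma>0$ lets Lemma~\ref{lemma-4} give $(\mathtt{j}_{\nu,\beta,1}^{\mathtt{a,h}})^2<\mathtt{j}^2_{\nu,1}\le\mathtt{j}^2_{\nu,n}$, which validates a posteriori the restriction $|z|<r<\mathtt{j}^2_{\nu,1}$ and the positivity of each $\mathtt{j}^2_{\nu,n}-r$ used above.

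Conceptually nothing here goes beyond Theorems~\ref{thm-1} and \ref{theorem-g}. The obstacle I expect is purely bookkeeping: tracking constants through the substitution $z\mapsto\sqrt z$ so that $\tfrac12(1+a-a\nu)$ from the power of $z$, the term $-(\nu-1)(1-a)$ coming from Lemma~\ref{lemma-1}, and the $\nu$ from the Bessel recurrence combine to give exactly the coefficient of $\mathtt{J}_\nu$ in \eqref{eqnh-1}, together with checking that the theorem's hypothesis is the precise form of the hypotheses of Lemmas~\ref{lemma-2} and \ref{lemma-4}. The single new structural point is that the summand $z/(\mathtt{j}^2_{\nu,n}-z)$ being linear in $z$ removes the imaginary-axis case; once that is seen, the extremal root is located at once.
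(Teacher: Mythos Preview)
Your proposal is correct and follows essentially the same route as the paper: compute $z\mathtt{h}'_{a,\nu}/\mathtt{h}_{a,\nu}$ via Lemma~\ref{lemma-1} and the Weierstrassian decomposition to obtain $1+\tfrac12(a^{a/2}-1)(a\nu-a+1)-a^{a/2}\sum_n z/(\mathtt{j}^2_{\nu,n}-z)$, minimize on the positive axis via the minimum principle, and recast the extremal equation as the Dini equation~\eqref{eqnh-1}, with Lemma~\ref{lemma-2} ensuring real roots under the stated hypothesis. Your explicit appeal to Lemma~\ref{lemma-4} to confirm $(\mathtt{j}_{\nu,\beta,1}^{\mathtt{a,h}})^2<\mathtt{j}^2_{\nu,1}$, and your remark that the linearity in $z$ eliminates the imaginary-axis case, are points the paper leaves implicit.
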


\begin{proof}
It follows from \eqref{normal-GBF-1} that
\begin{align*}
\frac{\mathtt{h}_{a, \nu}'(z)}{\mathtt{h}_{a, \nu}(z)}= \frac{1+a-a \nu}{2z}+
\frac{ a^{a/2}}{2 \sqrt{z}} \frac{{}_a\mathtt{B}_{2a-1, a \nu-a+1, 1}'(a^{a/2} \sqrt{z})}
{{}_a\mathtt{B}_{2a-1, a \nu-a+1, 1}(a^{a/2} \sqrt{z})},
\end{align*}
and thus Lemma \ref{lemma-1} yields
\begin{align*}
\frac{z \mathtt{h}_{a, \nu}'(z)}{\mathtt{h}_{a, \nu}(z)}&= \frac{1+a-a \nu}{2}+
\frac{ a^{a/2} \sqrt{z} }{2 } \frac{{}_a\mathtt{B}_{2a-1, a \nu-a+1, 1}'(a^{a/2} \sqrt{z})}
{{}_a\mathtt{B}_{2a-1, a \nu-a+1, 1}(a^{a/2} \sqrt{z})}\\
& =\frac{1+a-a \nu}{2}+
\frac{ a^{a/2} }{2 } \left(\frac{\sqrt{z} \mathtt{J}_\nu'(\sqrt{z})}{\mathtt{J}_\nu(\sqrt{z})}-(\nu-1)(1-a)\right)\\
&=1-\frac{(a\nu+1-a)(1-a^{a/2})}{2}-a^{a/2} \sum_{n=1}^\infty \frac{z}{\mathtt{j}_{\nu, n}^2-z}.
\end{align*}

Proceeding similarly as in the proof of Theorem \ref{thm-1}, it is readily shown that
\begin{align*}
\real\frac{z \mathtt{h}_{a, \nu}'(z)}{\mathtt{h}_{a, \nu}(z)}
\geq 1-\frac{(a\nu+1-a)(1-a^{a/2})}{2}- a^{a/2} \sum_{n=1}^\infty \frac{|z|}{\mathtt{j}_{\nu, n}^2-|z|}=\frac{|z| \mathtt{h}_{a, \nu}'(|z|)}{\mathtt{h}_{a, \nu}(|z|)}=\beta
\end{align*}
if and only if $|z|\leq r^*(\mathtt{h}_{\nu, \beta}) < \mathtt{j}_{\nu,n}$. Here $r^*(\mathtt{h}_{\nu, \beta})$ is the smallest root of the equation $r \mathtt{h}_{a, \nu}'(r) / \mathtt{h}_{a, \nu}(r)=\beta,$ that is, a root of
\[\frac{1+a-a \nu}{2}+\frac{ a^{a/2} }{2 } \left(\frac{\sqrt{r} \mathtt{J}_\nu'(\sqrt{r})}{\mathtt{J}_\nu(\sqrt{r})}-(\nu-1)(1-a)\right)=\beta\\,\]
or equivalently, of the equation
\begin{align*}\label{h1}
a^{a/2} r \mathtt{J}_\nu'(r) + \left( (a^{a/2}-1)(1-a+a\nu) -a^{a/2} \nu+2(1-\beta)\right)\mathtt{J}_\nu(r)=0.
\end{align*}
Thus by Lemma \ref{lemma-2}, $r^*(\mathtt{h}_{\nu, \beta})$ is the smallest positive root $\mathtt{j}_{\nu, \beta, 1}^{a, h}$ of \eqref{eqnh-1} when $(a^{a/2}-1)(1-a+a\nu)+2(1-\beta)>0.$
\end{proof}

\begin{remark}
 In the case $a=1$, the condition $ a(\nu-1)\left(a^{a/2}-1\right) +a^{a/2}-\beta=1-\beta >0$ and $\left(a^{a/2}-1\right)(1-a+a\nu)+2(1-\beta)=2(1-\beta)>0$ both hold trivially for all $\beta \in [0,1)$.  Both theorems therefore coincide with the earlier results in \cite{BKS-ams}.

Further, it is of interest to determine the radius of starlikeness $r_\beta^*(\mathtt{g}_{a, \nu})$ in Theorem \ref{theorem-g} in the event that $a(\nu-1)(a^{a/2}-1)+a^{a/2}-\beta < 0$, as well as that of  $r_\beta^*(\mathtt{h}_{a, \nu})$ in Theorem \ref{theorem-h} when $(a^{a/2}-1)(1-a+a\nu)+2(1-\beta)<0$.
\end{remark}

\section{Starlikeness of the generalized Bessel function}
In this final section, the best range on $\nu$ is obtained for a fixed $a \in \mathbb{N}$ to ensure the functions $\mathtt{f}_{a, \nu}$ and $\mathtt{g}_{a, \nu}$ given by \eqref{normal-GBF-1} are starlike of order $\beta$ in $\mathbb{D}$.

\begin{theorem}\label{thm-f-star}
For a fixed $a \in \mathbb{N},$ the function $\mathtt{f}_{a, \nu}$ given by \eqref{normal-GBF-1} is starlike of order $\beta \in [0, 1)$ in $\mathbb{D}$ if and only if $\nu \geq \nu_{f}(a,\beta)$, where $\nu_{f}(a, \beta)$ is the unique root of $$(a\nu-a+1)(a^{a/2}-\beta)\mathtt{J}_\nu(1)=a^{a/2} \mathtt{J}_{\nu+1}(1)$$ in $( (a-1)/a, \infty)$.
\end{theorem}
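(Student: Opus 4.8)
The plan is to reduce starlikeness of order $\beta$ in $\mathbb{D}$ to the single scalar inequality $\mathtt{f}'_{a,\nu}(1)/\mathtt{f}_{a,\nu}(1) \geq \beta$, and then to analyse how this quantity behaves as $\nu$ ranges over $((a-1)/a,\infty)$.

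First I would reuse the machinery from the proof of Theorem \ref{thm-1}. For $\nu > (a-1)/a$ we have $\mathtt{j}_{\nu,1} > \mathtt{j}_{0,1} \approx 2.4 > 1$, so $\overline{\mathbb{D}} \subset \mathbb{D}_{\mathtt{j}_{\nu,1}}$, and the estimate established there gives $\real\left(z\mathtt{f}'_{a,\nu}(z)/\mathtt{f}_{a,\nu}(z)\right) \geq |z|\mathtt{f}'_{a,\nu}(|z|)/\mathtt{f}_{a,\nu}(|z|)$ for every $z \in \mathbb{D}$. Moreover, from the series representation \eqref{eqn:8} each summand $2r^2/(\mathtt{j}^2_{\nu,n}-r^2)$ is strictly increasing in $r\in[0,\mathtt{j}_{\nu,1})$ and is multiplied by the negative constant $-a^{a/2}/(a\nu-a+1)$, so $r \mapsto r\mathtt{f}'_{a,\nu}(r)/\mathtt{f}_{a,\nu}(r)$ is strictly decreasing on $[0,1]$. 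Combining the two facts, $\inf_{z\in\mathbb{D}}\real\left(z\mathtt{f}'_{a,\nu}(z)/\mathtt{f}_{a,\nu}(z)\right)$ equals the boundary value $\mathtt{f}'_{a,\nu}(1)/\mathtt{f}_{a,\nu}(1)$, approached only in the limit; hence $\mathtt{f}_{a,\nu}\in\mathcal{S}^*(\beta)$ if and only if $\mathtt{f}'_{a,\nu}(1)/\mathtt{f}_{a,\nu}(1) \geq \beta$ (the boundary case of equality still gives strict inequality inside $\mathbb{D}$, so it is included).

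Next I would evaluate \eqref{eqn:7} at $z=1$ and substitute the recurrence $\mathtt{J}'_\nu(1) = \nu\mathtt{J}_\nu(1) - \mathtt{J}_{\nu+1}(1)$ together with the identity $\nu - (\nu-1)(1-a) = a\nu-a+1$, obtaining
\[
\Psi(\nu):=\frac{\mathtt{f}'_{a,\nu}(1)}{\mathtt{f}_{a,\nu}(1)} = a^{a/2} - \frac{a^{a/2}}{a\nu-a+1}\cdot\frac{\mathtt{J}_{\nu+1}(1)}{\mathtt{J}_\nu(1)}.
\]
Since $a\nu-a+1>0$ and $\mathtt{J}_\nu(1)>0$ (as $\mathtt{j}_{\nu,1}>1$), the condition $\Psi(\nu)\geq\beta$ is equivalent to $(a\nu-a+1)(a^{a/2}-\beta)\mathtt{J}_\nu(1) \geq a^{a/2}\mathtt{J}_{\nu+1}(1)$, with equality exactly at a root of the displayed equation in the statement.

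It then remains to show $\Psi$ is strictly increasing on $((a-1)/a,\infty)$, with $\Psi(\nu)\to-\infty$ as $\nu\downarrow(a-1)/a$ and $\Psi(\nu)\to a^{a/2}\geq 1>\beta$ as $\nu\to\infty$; the intermediate value theorem and strict monotonicity then give a unique $\nu_f(a,\beta)$ with $\Psi(\nu_f)=\beta$, and $\{\nu:\Psi(\nu)\geq\beta\}=[\nu_f(a,\beta),\infty)$, which is the assertion. For the monotonicity the cleanest route is the alternative form $\Psi(\nu) = a^{a/2} - \frac{2a^{a/2}}{a\nu-a+1}\sum_{n\geq1}\frac{1}{\mathtt{j}^2_{\nu,n}-1}$ read off from \eqref{eqn:8} at $z=1$: here $1/(a\nu-a+1)$ is positive and strictly decreasing, and each $\mathtt{j}_{\nu,n}$ is strictly increasing in $\nu$ (the Watson fact cited in Theorem \ref{zeros}, applicable since $\nu>0$ throughout this range) with $\mathtt{j}_{\nu,n}>1$, so the subtracted term is a product of positive strictly decreasing functions, hence strictly decreasing. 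For the endpoint behaviour: as $\nu\downarrow(a-1)/a$ the factor $a\nu-a+1\downarrow 0^+$ while $\sum_{n\geq1}1/(\mathtt{j}^2_{\nu,n}-1)$ stays bounded away from $0$, forcing $\Psi\to-\infty$; as $\nu\to\infty$, the ratio $\mathtt{J}_{\nu+1}(1)/\mathtt{J}_\nu(1)\to 0$ (from the power series for $\mathtt{J}_\nu(1)$ it is asymptotic to $\tfrac12/(\nu+1)$), so $\Psi(\nu)\to a^{a/2}$. The main technical point — albeit a short one — is ensuring the cited monotonicity of the zeros $\mathtt{j}_{\nu,n}$ transfers to strict monotonicity of $\Psi$ uniformly over $n$ (and establishing the two limits); everything else is bookkeeping with the recurrence relations already recorded in the paper.
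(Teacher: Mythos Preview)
Your proposal is correct and follows essentially the same route as the paper: both reduce starlikeness to the boundary inequality $\mathtt{f}'_{a,\nu}(1)/\mathtt{f}_{a,\nu}(1)\geq\beta$ via \eqref{eqn:8}, then use the monotonicity of $\mathtt{j}_{\nu,n}$ in $\nu$ to show $\nu\mapsto\Psi(\nu)$ is increasing and apply the recurrence relations to rewrite the threshold equation. The only cosmetic differences are that you use the recurrence $z\mathtt{J}'_\nu=\nu\mathtt{J}_\nu-z\mathtt{J}_{\nu+1}$ directly (the paper goes through $\mathtt{J}_{\nu-1}$ and Proposition~\ref{recurrence}) and that you spell out the endpoint behaviour of $\Psi$ more explicitly than the paper's ``from consideration of the asymptotic behavior''.
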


\begin{proof} For $\nu >(a-1)/a$ and $|z|=r \in [0,1),$ it follows from \eqref{eqn:8} that
\begin{align*}
\real\left(\frac{ z \mathtt{f}'_{a, \nu}(z)}{\mathtt{f}_{a, \nu}(z)}\right)\geq \frac{ r
\mathtt{f}'_{a, \nu}(r)}{ \mathtt{f}_{a, \nu}(r)}=
a^{a/2} -\frac{  a^{a/2}}{a \nu-a+1} \sum_{n=1}^\infty \frac{ 2 r^2}{\mathtt{j}^2_{\nu, n}-r^2}.
\end{align*}
The above inequality holds since $r<1$ and it is known (\cite[p. 508]{Watson}, \cite[p. 236]{Olver}) that  the function $\nu \mapsto \mathtt{j}_{\nu, n}$ is increasing on $(0,\infty)$ for each fixed $n \in \mathbb{N}$, and whence
$\mathtt{j}_{\nu, 1} \geq \mathtt{j}_{(a-1)/a, 1} \geq \mathtt{j}_{0, 1} \approx  2.40483\ldots$.

A computation yields
\[ \frac{d}{dr}\left(\frac{ r
\mathtt{f}'_{a, \nu}(r)}{ \mathtt{f}_{a, \nu}(r)}\right)=-\frac{ 2 a^{a/2}}{a \nu-a+1} \sum_{n=1}^\infty \frac{2 r \mathtt{j}_{\nu, n}^2}{\left(\mathtt{j}_{\nu, n}^2-r^2\right)^2} \leq 0. \]
Hence
\begin{align*}
\real\left(\frac{ z \mathtt{f}'_{a, \nu}(z)}{\mathtt{f}_{a, \nu}(z)}\right)\geq
a^{a/2} -\frac{  a^{a/2}}{a \nu-a+1} \sum_{n=1}^\infty \frac{ 2 }{\mathtt{j}^2_{\nu, n}-1}= \frac{\mathtt{f}'_{a, \nu}(1)}{\mathtt{f}_{a, \nu}(1)}.
\end{align*}

The monotonicity property of $\nu \mapsto \mathtt{j}_{\nu, n}$ leads to
 \[\frac{\mathtt{f}'_{a, \mu}(1)}{\mathtt{f}_{a, \mu}(1)}=a^{a/2} -\frac{  a^{a/2}}{a \mu-a+1} \sum_{n=1}^\infty \frac{ 2 }{\mathtt{j}^2_{\mu, n}-1}
 \ge a^{a/2} -\frac{  a^{a/2}}{a \nu-a+1} \sum_{n=1}^\infty \frac{ 2 }{\mathtt{j}^2_{\nu, n}-1}=\frac{\mathtt{f}'_{a, \nu}(1)}{\mathtt{f}_{a, \nu}(1)},\]
 $\mu \geq \nu>-1$. Since $\nu \mapsto {\mathtt{f}'_{a, \nu}(1)}/{\mathtt{f}_{a, \nu}(1)}$ is increasing in $\left((a-1)/a, \infty\right),$ and from consideration of the asymptotic behavior of ${\mathtt{f}'_{a, \nu}(1)}/{\mathtt{f}_{a, \nu}(1)},$ evidently ${\mathtt{f}'_{a, \nu}(1)}/{\mathtt{f}_{a, \nu}(1)}\geq \beta$ if and only if $\nu \geq \nu_f(a,\beta),$ where  $\nu_f(a,\beta)$ is the unique root of the equation $\mathtt{f}'_{a, \nu}(1)=\beta \mathtt{f}_{a, \nu}(1).$ From \eqref{eqn:6}, the latter equation is equivalent to
\[ a^{a/2} \mathtt{J}_{\nu-1}(1)=\left( a^{a/2}(\nu(2-a)+a-1)+\beta(a\nu-a+1)\right)\mathtt{J}_{\nu}(1).\]
The recurrence relation in Proposition \ref{recurrence} now shows that $\nu_f(a, \beta)$ is a unique root of $(a\nu-a+1)(a^{a/2}-\beta)\mathtt{J}_\nu(1)=a^{a/2} \mathtt{J}_{\nu+1}(1)$. Since all inequalities are sharp, it follows that the value $\nu_f(a, \beta)$ is best.
\end{proof}

\begin{remark}
With regards to Theorem \ref{thm-f-star}, we tabulate the best value $\nu$ for a fixed $\beta$ and $a$ for which $\mathtt{f}_{a, \nu}$ is starlike of order $\beta$. These values are given in Table \ref{table-1}.

\begin{table}[h]
  \centering
  \begin{tabular}{|c|c|c|c|}
    \hline
     & $\beta=0$ & $\beta=0.5$& $\beta=0.95$\\ \hline
    $a=1$ & $\nu=0.39001$ & $\nu=0.645715$ & $\nu=2.72421$ \\ \hline
    $a=2$ & $\nu=0.659908$ & $\nu=0.706779$ &$\nu=0.781815$ \\ \hline
    $a=3$ & $\nu=0.766251$ & $\nu=0.776181$  &$\nu=0.786989$\\
    \hline
  \end{tabular}
  \vspace{.2in}
  \caption{Values of $\nu$ for $\mathtt{f}_{a, \nu}$ to be starlike} \label{table-1}
\end{table}

Using \eqref{eqn1a}, we tabulate the radius of starlikeness for $\mathtt{f}_{a, \nu}$ in Theorem \ref{thm-1} for a fixed $\nu=0.7$, $a=1,2,3$, and respectively $\beta=0$ and $\beta=1/2$. These are given in Table \ref{table-2}. Here the value of $\mathtt{j}_{\nu,1}$ at  $\nu=0.7$ is $\mathtt{j}_{0.7,1}=3.42189$. With reference to Table \ref{table-1}, we expect the radius of starlikeness to be less than $1$ whenever $\nu=0.7$ is less than the given values of $\nu$ in Table \ref{table-1}.

\begin{table}[h]
  \centering
  \begin{tabular}{|c|c|c|c|}
    \hline
     & $\beta=0$ & $\beta=1/2$ & $\beta=0.95$\\ \hline
    $a=1$ & $r_0^*(f_{1, 0.7})=1.44678$ & $r_{1/2}^*(f_{1, 0.7})=1.05621$& $r_{0.95}^*(f_{1, 0.7})=0.343848$\\ \hline
    $a=2$ & $r_0^*(f_{2, 0.7})=1.12397$ & $r_{1/2}^*(f_{2, 0.7})=0.982365$& $r_{0.95}^*(f_{2, 0.7})=0.828745$ \\ \hline
    $a=3$ & $r_0^*(f_{3, 0.7})=0.577726$ & $r_{1/2}^*(f_{3, 0.7})=0.549716$& $r_{0.95}^*(f_{3, 0.7})=0.523133$ \\
    \hline
  \end{tabular}
  \vspace{.2in}
  \caption{Radius of starlikeness for $\mathtt{f}_{a, \nu}$ when $\nu=0.7$} \label{table-2}
\end{table}

Letting
\[F(r):= \frac{r \mathtt{J}_{\nu}'(r)}{\mathtt{J}_{\nu}(r)},\]
then \eqref{eqn1a} takes the form $F(r)=-\alpha,$ where
\[\alpha := \alpha(a, \beta,\nu)= -\left((\nu-1) (1-a)+\frac{\beta( a \nu-a+1)}{a^{a/2}}\right).\]
For $\nu > 0$, it is known \cite{BMPon} that F(r) is strictly decreasing on $(0,\infty)$ except at the zeros of $\mathtt{J}_{\nu}(r)$. Differentiating with respect to $\beta$, it is clear that $\alpha$ is decreasing with respect to $\beta$ so long as $a \nu-a+1>0,$ and thus $r_{\beta}^*$ is decreasing. Further, for a fixed $\nu<1$ and $\beta=0$, then $\alpha$ is monotonically decreasing with respect to $a$, that is, $r_{0}^*$ is decreasing as a function of $a$. However, for $\beta$ near $1$, then $\alpha$ is no longer monotonic. For instance, choosing $\nu=0.7$ and $\beta=0.95$, Table \ref{table-2} illustrates the fact that $r_{0.95}^*$ is not monotonic with respect to the parameter $a$.
\end{remark}

\begin{theorem}\label{thm-g-star}
Let $a \in \mathbb{N},$ $\nu >-1/a$, and $\mathtt{j}_{\nu, 1}$ be the first positive zero of $\mathtt{J}_{\nu}$. Then
the function $\mathtt{g}_{a, \nu}$ given by \eqref{normal-GBF-1} is starlike of order $\beta \in [0, 1)$ in $\mathbb{D}$ if and only if $\nu \geq \nu_{g}(a,\beta)$, where $\nu_{g}(a, \beta)$ is the unique root in $( \max\{\tilde{\nu},-1/a\}, \infty)$ of $$(a(\nu-1)(a^{a/2}-1)+a^{a/2}-\beta)\mathtt{J}_\nu(1)=a^{a/2} \mathtt{J}_{\nu+1}(1),$$ and $\tilde{\nu}\simeq -0.7745\ldots$ is the unique root of $\mathtt{j}_{\nu, 1}=1$.
\end{theorem}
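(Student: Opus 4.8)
The plan is to run the argument of Theorem~\ref{thm-f-star} with $\mathtt{f}_{a,\nu}$ replaced by $\mathtt{g}_{a,\nu}$, starting from the logarithmic derivative already computed in the proof of Theorem~\ref{theorem-g}, namely
\[
\frac{z\mathtt{g}_{a,\nu}'(z)}{\mathtt{g}_{a,\nu}(z)}
= a(1-\nu)+a^{a/2}\left(\frac{z\mathtt{J}_{\nu-1}(z)}{\mathtt{J}_\nu(z)}-\nu(2-a)+1-a\right)
= a(1-\nu)+a^{a/2}\left[a\nu+1-a-\sum_{n=1}^\infty\frac{2z^2}{\mathtt{j}_{\nu,n}^2-z^2}\right].
\]
The hypothesis $\nu>\max\{\tilde\nu,-1/a\}$ serves two purposes: the bound $\nu>-1/a$ is what legitimizes Lemma~\ref{lemma-1}, hence the identity above, while $\nu>\tilde\nu$, together with $\mathtt{j}_{\tilde\nu,1}=1$ and the monotonicity of $\nu\mapsto\mathtt{j}_{\nu,1}$, forces $\mathtt{j}_{\nu,1}>1$, so that $\mathtt{j}_{\nu,n}>|z|$ for every $z\in\mathbb{D}$ and every $n$ and the series $\sum_n 2/(\mathtt{j}_{\nu,n}^2-1)$ converges. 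If instead $\mathtt{j}_{\nu,1}\le1$, the right-hand side has a real pole in $[0,1]$, so $\mathtt{g}_{a,\nu}$ cannot be starlike of any order $\beta<1$; thus this range is also necessary.

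With $\nu>\max\{\tilde\nu,-1/a\}$ fixed, the reduction to a single boundary inequality proceeds verbatim as in the proof of Theorem~\ref{thm-1}. The estimate $\real\big(z^2/(\mathtt{j}_{\nu,n}^2-z^2)\big)\le|z|^2/(\mathtt{j}_{\nu,n}^2-|z|^2)$ and the negative coefficient of the sum give, for $|z|=r\in[0,1)$,
\[
\real\frac{z\mathtt{g}_{a,\nu}'(z)}{\mathtt{g}_{a,\nu}(z)}\ \ge\
a(1-\nu)+a^{a/2}\left[a\nu+1-a-\sum_{n=1}^\infty\frac{2r^2}{\mathtt{j}_{\nu,n}^2-r^2}\right]
=\frac{r\mathtt{g}_{a,\nu}'(r)}{\mathtt{g}_{a,\nu}(r)},
\]
with equality at $z=r$; and since $\frac{d}{dr}\big(r\mathtt{g}_{a,\nu}'(r)/\mathtt{g}_{a,\nu}(r)\big)=-a^{a/2}\sum_n 4r\mathtt{j}_{\nu,n}^2/(\mathtt{j}_{\nu,n}^2-r^2)^2\le0$, the right-hand side is strictly decreasing on $[0,1)$ with infimum $\mathtt{g}_{a,\nu}'(1)/\mathtt{g}_{a,\nu}(1)$, attained only in the limit $r\to1^-$. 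Hence $\mathtt{g}_{a,\nu}\in\mathcal{S}^{\ast}(\beta)$ if and only if $\mathtt{g}_{a,\nu}'(1)/\mathtt{g}_{a,\nu}(1)\ge\beta$ (for sufficiency the interior inequality is then strict, since the bound is attained only on the boundary; for necessity, let $r\to1^-$ along the positive axis).

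It then remains to analyse $\Phi(\nu):=\mathtt{g}_{a,\nu}'(1)/\mathtt{g}_{a,\nu}(1)=\big[a(1-\nu)+a^{a/2}(a\nu+1-a)\big]-a^{a/2}\sum_n 2/(\mathtt{j}_{\nu,n}^2-1)$ on $(\max\{\tilde\nu,-1/a\},\infty)$. The bracketed polynomial has $\nu$-derivative $a(a^{a/2}-1)\ge0$, and $\nu\mapsto\mathtt{j}_{\nu,n}$ increasing makes $-a^{a/2}\sum_n 2/(\mathtt{j}_{\nu,n}^2-1)$ strictly increasing, so $\Phi$ is strictly increasing; inspecting the endpoints ($\Phi\to-\infty$ as $\nu\downarrow\tilde\nu$ when that endpoint is active, $\Phi(-1/a)=a+1-a^{a/2+1}-a^{a/2}\sum_n 2/(\mathtt{j}_{-1/a,n}^2-1)<0\le\beta$ when the active endpoint is $-1/a$, and $\Phi\to+\infty$, or $\Phi\to1$ when $a=1$, as $\nu\to\infty$) yields a unique $\nu_g(a,\beta)$ in that interval with $\Phi(\nu_g(a,\beta))=\beta$, and $\Phi(\nu)\ge\beta\iff\nu\ge\nu_g(a,\beta)$. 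To put the defining equation into the stated form, I would start from $a(1-\nu)+a^{a/2}\big(\mathtt{J}_{\nu-1}(1)/\mathtt{J}_\nu(1)-\nu(2-a)+1-a\big)=\beta$, clear denominators, substitute the three-term recurrence $\mathtt{J}_{\nu-1}(1)=2\nu\mathtt{J}_\nu(1)-\mathtt{J}_{\nu+1}(1)$ of Proposition~\ref{recurrence}, and collect the coefficient of $\mathtt{J}_\nu(1)$ into $a(\nu-1)(a^{a/2}-1)+a^{a/2}-\beta$, arriving at $(a(\nu-1)(a^{a/2}-1)+a^{a/2}-\beta)\mathtt{J}_\nu(1)=a^{a/2}\mathtt{J}_{\nu+1}(1)$.

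The main obstacle I anticipate is the endpoint bookkeeping needed to guarantee that $\nu_g(a,\beta)$ genuinely lies in $(\max\{\tilde\nu,-1/a\},\infty)$: for $a\ge2$ the active left endpoint is $-1/a$, where $\Phi$ is finite rather than $-\infty$, so one must check that $\Phi(-1/a)<\beta$ for all $\beta\in[0,1)$ (which reduces to the elementary inequality $a+1<a^{a/2+1}$), and $\tilde\nu$ has to be identified precisely as the unique root of $\mathtt{j}_{\nu,1}=1$ on $(-1,\infty)$. The strict monotonicity of $\Phi$, resting on $a^{a/2}\ge1$ for the polynomial part and on $\nu\mapsto\mathtt{j}_{\nu,n}$ being increasing for the Mittag--Leffler part, is the other delicate point, but it follows at once from the facts already quoted in the paper; everything else is a routine transcription of the $\mathtt{f}_{a,\nu}$ argument.
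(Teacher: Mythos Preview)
Your proposal is correct and follows essentially the same route as the paper's proof: start from the logarithmic derivative \eqref{eqn-g}, reduce to the boundary value $\mathtt{g}_{a,\nu}'(1)/\mathtt{g}_{a,\nu}(1)$ via the minimum-principle estimate and monotonicity in $r$, then use the monotonicity of $\nu\mapsto\mathtt{j}_{\nu,n}$ to show $\nu\mapsto\mathtt{g}_{a,\nu}'(1)/\mathtt{g}_{a,\nu}(1)$ is increasing and rewrite the defining equation with the recurrence from Proposition~\ref{recurrence}. Your endpoint bookkeeping (the check $\Phi(-1/a)<0$ via $a+1<a^{a/2+1}$ for $a\ge2$, and the limits $\Phi\to-\infty$ at $\tilde\nu$, $\Phi\to1$ or $+\infty$ at $\infty$) is more explicit than the paper's, which simply asserts uniqueness of the root, but the underlying argument is the same.
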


\begin{proof}
It follows from \eqref{eqn-g} that
\begin{align*}
\frac{z\mathtt{g}_{a, \nu}'(z)}{\mathtt{g}_{a, \nu}(z)}
&= a (1-\nu)+ a^{a/2}\left(\frac{z\mathtt{J}'_{\nu}\left(z\right)}
{\mathtt{J}_{\nu}\left(z\right)}-(\nu-1)(1-a)\right)\\
&= a (1-\nu)+ a^{a/2}\left(a\nu-a+1-\sum_{n=1}^\infty \frac{ 2 z^2}{\mathtt{j}_{\nu, n}^2-z^2}\right),
\end{align*}
and
\begin{align*}
\real\left(\frac{z\mathtt{g}_{a, \nu}'(z)}{\mathtt{g}_{a, \nu}(z)}\right)
\geq \frac{r\mathtt{g}_{a, \nu}'(r)}{\mathtt{g}_{a, \nu}(r)}=  a (1-\nu)+ a^{a/2}\left(a\nu-a+1-\sum_{n=1}^\infty \frac{ 2 r^2}{\mathtt{j}_{\nu, n}^2-r^2}\right)
\end{align*}
for $|z|<\mathtt{j}_{\nu, 1}$. The function $r \mapsto
{r\mathtt{g}_{a, \nu}'(r)}/{\mathtt{g}_{a, \nu}(r)}$ is decreasing on $[0, 1).$ Since $\mathtt{j}_{\nu, 1 } > 1$ for $\nu > \max\{\tilde{\nu}, -1/a\}$, it follows that $\mathtt{j}_{\nu, n } > 1$ for each $n$, and consequently
\begin{align*}
\real\left(\frac{z\mathtt{g}_{a, \nu}'(z)}{\mathtt{g}_{a, \nu}(z)}\right)
\geq \frac{r\mathtt{g}_{a, \nu}'(r)}{\mathtt{g}_{a, \nu}(r)}> \frac{\mathtt{g}_{a, \nu}'(1)}{\mathtt{g}_{a, \nu}(1)}=  a (1-\nu)+ a^{a/2}\left(a\nu-a+1-\sum_{n=1}^\infty \frac{ 2 }{\mathtt{j}_{\nu, n}^2-1}\right).
\end{align*}

Further, for $\mu \geq \nu > \max\{\tilde{\nu},-1/a\}$,
\begin{align*}
\frac{\mathtt{g}_{a, \mu}'(1)}{\mathtt{g}_{a, \mu}(1)}&= a\left(a^{a/2}-1\right)(\mu-1)+a^{a/2}-\sum_{n=1}^\infty \frac{ 2  a^{a/2} }{\mathtt{j}_{\mu,
n}^2-1}\\
&\geq a\left(a^{a/2}-1\right)(\nu-1)+a^{a/2}-\sum_{n=1}^\infty \frac{ 2  a^{a/2} }{\mathtt{j}_{\nu,
n}^2-1}=\frac{\mathtt{g}_{a, \nu}'(1)}{\mathtt{g}_{a, \nu}(1)}.
\end{align*}
This implies that $\nu \mapsto \mathtt{g}_{a, \nu}'(1)/\mathtt{g}_{a, \nu}(1)$ is increasing on $( \max\{\tilde{\nu},-1/a\}, \infty)$.

Thus
\[ \real\left(\frac{z\mathtt{g}_{a, \nu}'(z)}{\mathtt{g}_{a, \nu}(z)}\right)
> \frac{\mathtt{g}_{a, \nu}'(1)}{\mathtt{g}_{a, \nu}(1)}\geq \beta\]
if and only if $\nu \geq \nu_g(a,\beta)$, where $\nu_g(a,\beta)$ is the unique root of $\mathtt{g}_{a, \nu}'(1)=\beta \mathtt{g}_{a, \nu}(1),$ or equivalently,
\[a (1-\nu)+ a^{a/2}\left(\frac{\mathtt{J}'_{\nu}\left(1\right)}
{\mathtt{J}_{\nu}\left(1\right)}-(\nu-1)(1-a)\right)=\beta.\]
Finally, Proposition \ref{recurrence} implies that $\nu_{g}(a, \beta)$ is a unique root of
$$(a(\nu-1)(a^{a/2}-1)+a^{a/2}-\beta)\mathtt{J}_\nu(1)=a^{a/2} \mathtt{J}_{\nu+1}(1).$$
\end{proof}

\begin{remark}
 The best value $\nu$ obtained from  Theorem \ref{thm-g-star} for a fixed $\beta$ and $a$ for which $\mathtt{g}_{a, \nu}$ is starlike of order $\beta$ is given in Table \ref{table-3}.

\begin{table}[h]
  \centering
  \begin{tabular}{|c|c|c|c|}
    \hline
     & $\beta=0$ & $\beta=0.5$ & $\beta=0.95$\\ \hline
    $a=1$ & $\nu=-0.340092$ & $\nu=0.122499$& $\nu=9.02272$ \\ \hline
    $a=2$ & $\nu=0.39002$ & $\nu=0.586273$ & $\nu=0.772587$ \\ \hline
    $a=3$ & $\nu=0.714616$ & $\nu=0.751407$& $\nu=0.784626$  \\
    \hline
  \end{tabular}
  \vspace{.2in}
  \caption{Values of $\nu$ for $\mathtt{g}_{a, \nu}$ to be starlike} \label{table-3}
\end{table}
The radius of starlikeness for $\mathtt{g}_{a, \nu}$ drawn from Theorem \ref{theorem-g} is tabulated in Table \ref{table-4} for a fixed $\nu=0.7$, $a=1,2,3$, and respectively $\beta=0$, $\beta=0.5$ and $\beta=0.95$. Here the radius of starlikeness is expectedly less than $1$ whenever $\nu=0.7$ is less than the given values of $\nu$ in Table \ref{table-3}. A similar situation occurs as for the function $\mathtt{f}_{a, \nu}$ with regard to the monotonicity of the radius of starlikeness with respect to either parameter $\beta$ or $a$.

\begin{table}[h]
  \centering
  \begin{tabular}{|c|c|c|c|c|}
    \hline
     & $\beta=0$ & $\beta=1/2$ &$\beta=0.95$ \\ \hline
    $a=1$ & $r_0^*(g_{1, 0.7})=1.68326$ & $r_{1/2}^*(g_{1, 0.7})=1.24519$
    & $r_{0.95}^*(g_{1, 0.7})=0.410407$ \\ \hline
    $a=2$ & $r_0^*(g_{2, 0.7})=1.44678$ & $r_{1/2}^*(g_{2, 0.7})=1.1867$  & $r_{0.95}^*(g_{2, 0.7})=0.856647$\\  \hline
    $a=3$ & $r_0^*(g_{3, 0.7})=0.939782$ & $r_{1/2}^*(g_{3, 0.7})=0.763126$
    & $r_{0.95}^*(g_{3, 0.7})=0.549716$\\
    \hline
  \end{tabular}
  \vspace{.2in}
  \caption{The radius of starlikeness for $\mathtt{g}_{a, \nu}$ when $\nu=0.7$} \label{table-4}
\end{table}
\end{remark}
\begin{remark}
For $a=1$, Theorem \ref{thm-f-star} and Theorem \ref{thm-g-star} respectively reduces to Theorem 1 and Theorem 2 in \cite{BMD}.
\end{remark}
\section*{Funding}
The work of the first author was supported in parts by a FRGS research grant 203/PMATHS/6711568, while the second author was supported by a USM-RU grant 1001/PMATHS/8011015.


\begin{thebibliography}{999}

\bibitem{Ali-2}
  Ali~ RM,  Mondal~ SR, Nisar~ KS. Monotonicity properties of the generalized Struve functions. J. Korean Math. Soc.  2017. {\bf 54}, no.~2: 575--598.

\bibitem{Ali-1} Ali~ RM, Lee~ SK,  Mondal~ S R,  On representation and monotonicity of a generalized Bessel function, Communicated.

\bibitem{Baricz-Galues-Bessel}   Baricz~\'A, Functional inequalities for Galu\'e's generalized modified Bessel functions, J. Math. Inequal. 2007. {\bf 1} no.~2, 183--193.

\bibitem{BKS-ams} Baricz~\'A.,  Kup\'an~PA,  Sz\'asz~R, The radius of starlikeness of normalized Bessel functions of the first kind, Proc. Amer. Math. Soc. 2014. {\bf 142} , no.~6, 2019--2025.

\bibitem{BMD} Baricz~\'A.,  \c{C}a\u{g}lar~M,  Deniz~ E, Starlikeness of Bessel functions and their derivations, Math. Inequal. Appl. 2016. {\bf 19} , no.~2, 439--449

\bibitem{BMPon} Baricz~\'A.,   Ponnusamy~ S,  Singh~S, Tur\'an type inequalities for general Bessel functions, Math. Inequal. Appl. 2016. {\bf 19}, no.~2, 709--719.

\bibitem{Brown-1} Brown~ RK, Univalence of Bessel functions, Proc. Amer. Math. Soc. 1960. {\bf 11}, 278--283.


\bibitem{IM-1}  Ismail~ MEH,  Muldoon~ M E, Zeros of combinations of Bessel functions and their derivatives, Appl. Anal. 1988. {\bf 31}, no.~1-2, 73--90.

\bibitem{IM-2} Ismail~ MEH,  Muldoon~ M E, Bounds for the small real and purely imaginary zeros of Bessel and related functions, Methods Appl. Anal. 1995. {\bf 2} , no.~1, 1--21.
\bibitem{Landau}  Landau~ L J, Ratios of Bessel functions and roots of $\alpha J_\nu(x)+xJ'_\nu(x)=0$, J. Math. Anal. Appl. 1999. {\bf 240}, no.~1, 174--204.
\bibitem{Olver}   Olver~ FWJ,   Lozier~DW,   Boisvert~RF,   Clark~C W, {\it NIST handbook of mathematical functions}, U.S. Dept. Commerce, Washington, DC, 2010.



\bibitem{Watson}  Watson, G. N.:{\it A Treatise on the Theory of Bessel Functions}, Cambridge Univ. Press, Cambridge, England, 1944.



\end{thebibliography}
\end{document}